\documentclass[a4paper,11pt]{article}
\setcounter{secnumdepth}{1}
\setcounter{tocdepth}{1}
\usepackage{amsfonts,graphics,amsmath,amsthm,amsfonts,amscd,amssymb}
\usepackage{amsmath,latexsym,multicol,mathrsfs,stmaryrd,color,mathtools,paralist}
\usepackage{epsfig,url,pxfonts,accents}
\usepackage{fullpage}
\usepackage{flafter,multirow}
\usepackage{galois,enumerate}
\usepackage{fancyhdr}
\usepackage{hyperref}
\usepackage{textcomp}
\usepackage{tocloft}
\setlength{\cftbeforesecskip}{3pt}

\hypersetup{colorlinks=true, linkcolor=black}
%%%%%%%%%%%

%%%%%%%%%%%%
%\addtolength{\oddsidemargin}{-0.4in}
%\addtolength{\evensidemargin}{0in}
%\addtolength{\textwidth}{0.8in}

%\addtolength{\topmargin}{-0.6in}
%\addtolength{\textheight}{0.8in}

%%%%%%%%%%%%%%%%%%%%%%
%bar setting
\newcommand{\ovline}[1]{%
  \vbox{%
    \hrule height 0.6pt%            % Line above with certain width
    \kern0.22ex%                    % Distance between line and content
    \hbox{%
      \kern-0.05em%                  % Distance between content and left side of box, negative values for lines shorter than content
      \ifmmode#1\else\ensuremath{#1}\fi%  % The content, typeset in dependence of mode
      \kern0em%                  % Distance between content and left side of box, negative values for lines shorter than content
    }% end of hbox
  }% end of vbox
}

%%%%%%%%%%%%%%%%%%%%
\makeatletter

\def\jobis#1{FF\fi
  \def\predicate{#1}%
  \edef\predicate{\expandafter\strip@prefix\meaning\predicate}%
  \edef\job{\jobname}%
  \ifx\job\predicate
}

\makeatother

\if\jobis{proposal}%
\else
\fi

\usepackage[all]{xy}
\renewcommand{\mod}{\operatorname{mod}}

\DeclareMathOperator{\Hom}{Hom}

 %\newcounter{thm}[theorem]
 % all theorems are numbered by the subsection counter

 \numberwithin{equation}{subsection}
 \numberwithin{footnote}{subsection}

 \newtheorem{cor}[subsection]{Corollary}
 \newtheorem{lem}[subsection]{Lemma}
 \newtheorem{prop}[subsection]{Proposition}
 \newtheorem{thm}[subsection]{Theorem}

 \newtheorem{conj}[subsection]{Conjecture}

 \newtheorem{quest}[subsection]{Question}
{%_%_%_%_% upright style; roman (non-italic) text
    \newtheoremstyle{upright}%
        {8pt plus2pt minus4pt}%
        {8pt plus2pt minus4pt}%
        {\upshape}%
        {}%
        {\bfseries\scshape}%
        {}%
        {1em}%
        {}%
\theoremstyle{upright}

 \newtheorem{rem}[subsection]{Remark}
 \newtheorem{defn}[subsection]{Definition}
 
 \newtheorem{exam}[subsection]{Example}

}

\newcommand{\x}{\mathscr}
\newcommand{\f}{\mathfrak}

\newcommand{\C}{\mathbb C}

\newcommand{\oo}{\mathscr O}

\newcommand{\BP}{\mathbb P}

\newcommand{\Q}{\mathbb Q}
\newcommand{\R}{\mathbb R}

\newcommand{\Z}{\mathbb Z}
\newcommand{\spec}{\mathrm{Spec}\hspace{0.5mm}}
\newcommand{\proj}{\mathrm{Proj}\hspace{0.5mm}}

\newcommand{\aut}{\mathrm{Aut}}
\newcommand{\gl}{\mathrm{GL}}
\newcommand{\pgl}{\mathrm{PGL}}

\newcommand{\cre}{\mathrm{Cr}}

\newcommand{\bir}{\mathrm{Bir}}

\newcommand{\gal}{\mathrm{Gal}}

\newcommand{\ns}{\mathrm{NS}}

 % round-down
 % round-up

\newcommand{\gr}{\mathbb{G}}

\newcommand{\wcl}{\mathrm{Cl}}
\newcommand{\hilb}{\mathrm{Hilb}}

%%%%%%%%%%%%%%%%%%%%%%%%%%%%%%%%%%%%%%%%%%%%%%%%%%%%%

\begin{document}
\title{Finite $p$-groups of birational automorphisms and characterizations of rational varieties}
\author{Jinsong Xu}
\date{}
\maketitle
\begin{abstract}
	We study finite $p$-subgroups of birational automorphism groups. By virtue of boundedness theorem of Fano varieties, we prove that there exists a constant $R(n)$ such that a rationally connected variety of dimension $n$ over an algebraically closed field is rational if its birational automorphism group contains a $p$-subgroups of maximal rank for $p > R(n)$. Some related applications on Jordan property are discussed.
\end{abstract}

\tableofcontents

\section{Introduction}
The group of birational automorphisms of an algebraic variety is an important birational invariant. This group is in general quite complicated and difficult to study. For example, the group $\cre_n(k) := \bir(\BP^n_k)$, known as the Cremona group of rank $n$ over a field $k$, is a huge group that attracts a lot of interests. In their work of studying Jordan property (see Definition \ref{boundjordan}) of the Cremona group, Yu. Prokhorv and C. Shramov proved an interesting result on finite $p$-groups of birational automorphisms:

\begin{thm} \cite[Theorem 1.10]{ps1} \label{psrank}
	There exists a constant $L(n)$ such that for any rationally connected variety $X$ of dimension $n$ defined over any field $k$ of characteristic zero and for any prime $p > L(n)$, every finite $p$-subgroup of $\bir(X)$ is abelian and can be generated by at most $n$.
\end{thm}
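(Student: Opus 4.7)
The plan is to apply $G$-equivariant MMP to reduce $X$ to a $G$-Mori fiber space, then combine the BAB theorem (boundedness of Fano varieties with terminal singularities, due to Birkar) with the structure theory of finite $p$-subgroups of algebraic groups, and close the argument by induction on dimension.

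Given a finite $p$-subgroup $G \subset \bir(X)$, one first passes to the algebraic closure $\bar k$ (since $\bir(X)$ embeds in $\bir(X_{\bar k})$). Equivariant resolution followed by equivariant MMP in characteristic zero then produces a $G$-Mori fiber space $\pi : X' \to S$ with $\mathbb Q$-factorial terminal singularities, $G$-birational to $X$ and carrying a biregular $G$-action. Let $G_0 \triangleleft G$ be the kernel of the induced map to $\bir(S)$, so $G_0$ acts on the generic fiber $X'_\eta$, a Fano variety with terminal singularities over $K := k(S)$. By Birkar's boundedness theorem, $X'_{\bar\eta}$ lies in a bounded family depending only on $\dim X'_\eta \le n$; in particular $\aut(X'_{\bar\eta})$ is an algebraic group whose dimension and number of connected components are bounded in terms of $n$ alone. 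A Jordan-Serre type argument for finite $p$-subgroups of algebraic groups then supplies a constant $p_0(n)$ such that, for $p > p_0(n)$, every finite $p$-subgroup of $\aut(X'_{\bar\eta})$ lies inside a maximal torus of the identity component; in particular $G_0$ is abelian of rank at most $\dim X'_\eta$.

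The quotient $G/G_0$ embeds into $\bir(S)$, and $S$ is rationally connected (by Graber-Harris-Starr) of dimension strictly smaller than $n$. Induction on $n$ gives, for $p > L(n-1)$, that $G/G_0$ is abelian of rank at most $\dim S$. Adding the two rank bounds yields rank $\le \dim X'_\eta + \dim S = n$ for $G$. For abelianness, one observes that $G/G_0$ permutes the finite set of maximal tori of $\aut(X'_{\bar\eta})^\circ$ containing $G_0$ (a set of size bounded by the Weyl group of the identity component); for $p$ sufficiently large it therefore normalizes a fixed such torus $T$, and the conjugation action on the character lattice $X^*(T)$ factors through a finite subgroup of $\gl_r(\mathbb Z)$ with $r \le n$, which has order bounded by a Minkowski-type constant $c(n)$. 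Enlarging $L(n)$ beyond $c(n)$ forces this action to be trivial, so $G_0$ is central in $G$, and $G$ itself is abelian. Setting $L(n) := \max\{p_0(n), L(n-1), c(n)\}$ closes the induction.

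The principal obstacle is the uniformity needed to assemble these ingredients: $L(n)$ must simultaneously dominate the Jordan constant for $p$-subgroups in the bounded family of Fano automorphism groups, the Minkowski-type bound on finite subgroups of $\gl_r(\mathbb Z)$ for $r \le n$, and the inductive constant $L(n-1)$. Birkar's boundedness of Fano varieties is the deepest input, since without it neither the dimension of $\aut(X'_{\bar\eta})^\circ$ nor its component group can be bounded in terms of $n$ alone, and the reduction to a torus breaks down.
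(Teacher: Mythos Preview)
This theorem is not proved in the paper at all: it is quoted verbatim from Prokhorov--Shramov \cite[Theorem~1.10]{ps1} and used as input. So there is no proof here to compare your proposal against. That said, a few remarks on your sketch versus the argument in \cite{ps1} may still be useful.

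Your rank bound is sound and follows the same inductive skeleton as \cite{ps1}: $G$-MMP to a Mori fibre space, BAB to bound the automorphism group of the fibre, a torus acting faithfully on a $d$-fold has rank $\le d$, and induction on the base.

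Your abelianness step has a real gap. The claim that $G/G_0$ permutes a \emph{finite} set of maximal tori of $\aut(X'_{\bar\eta})^\circ$ containing $G_0$ is generally false: those tori are exactly the maximal tori of the centraliser $Z_{\aut(X'_{\bar\eta})^\circ}(G_0)$, a connected reductive group that need not itself be a torus, so there are usually infinitely many. Even if $G$ did normalise one such $T$, elements of $G\setminus G_0$ act on $X'_{\bar\eta}$ only semilinearly (they move the base field $k(S)$), so the conjugation action on $G_0\cong(\Z/p\Z)^r$ lands in $\gl_r(\mathbb{F}_p)$, whose $p$-Sylow has order $p^{r(r-1)/2}$; a Minkowski bound on $\gl_r(\Z)$ does not kill it.

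The route actually taken in \cite{ps1} sidesteps this entirely. Their key device is the almost-fixed-point theorem (quoted in this paper as Theorem~\ref{fixed}): a subgroup of $G$ of bounded index fixes a smooth point on some model, hence embeds in $\gl_n(\bar k)$ via the tangent representation. For $p>n$ every irreducible representation of a $p$-group over $\bar k$ has dimension a power of $p$ and hence equals $1$, so $G$ is simultaneously diagonalisable --- abelian of rank $\le n$ --- in a single stroke. This is both shorter and avoids the fibre/base interaction that trips up your argument.
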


Intuitively, the $p$-group in Theorem \ref{psrank} comes from $p$-torsion subgroup of an algebraic torus in $\bir(X)$. Naturally, we may ask whether such a $p$-group is indeed contained in some subtorus of $\bir(X)$. It turns out that we don't need to restrict ourselves on rationally connected varieties, so we propose the following question.

\begin{quest} \label{finitecontinuous}
	Let $X$ be an algebraic variety over an algebraically closed field. Assume that $\bir(X)$ contains a finite subgroup $H$ that is isomorphic to $(\Z/p\Z)^r$ for a sufficiently large prime $p$. Is it true that $X$ is birational to some variety having an action of a torus or an abelian variety?
\end{quest}

The purpose of this paper is to study this question in some special cases. Our main result is a rationality criterion of rationally connected varieties by means of finite $p$-groups of birational automorphisms.

\begin{thm} \label{rationality} (see Theorem \ref{rationalrcv})
	There exists a constant $R(n)$ (depending only on $n$) such that for any $n$-dimensional rationally connected variety $X$ over an algebraically closed field of characteristic zero, if $\bir(X)$ contains a subgroup $G$ isomorphic to $(\Z/p\Z)^n$ for some prime $p > R(n)$, then $X$ is rational.
\end{thm}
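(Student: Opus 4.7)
The plan is to induct on $n$ using the $G$-equivariant minimal model program, combining the rank bound from Theorem \ref{psrank} with Birkar's boundedness of Fano varieties. The base case $n=0$ is trivial. For the inductive step, replace $X$ by a smooth projective $G$-model and run the $G$-equivariant MMP to obtain a $G$-equivariant Mori fiber space $\pi \colon X' \to S$. The base $S$ is rationally connected by Graber--Harris--Starr. Let $H \trianglelefteq G$ be the kernel of $G \to \bir(S)$, so $G/H$ acts faithfully on $S$ and $H$ acts faithfully on the generic fiber $F := X'_\eta$. Applying Theorem \ref{psrank} to both actions gives $\rk(G/H) \leq \dim S$ and $\rk(H) \leq \dim F$, with both equalities forced by $\rk(G) = n = \dim S + \dim F$. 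Since $\dim S < n$, the inductive hypothesis yields that $S$ is rational.

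It now suffices to prove that $F$ is rational over $K := k(S)$, as then $X'$ is $k$-birational to $\BP^{\dim F}_K$ and hence to $\BP^n_k$. Here $F$ is a Fano variety of dimension $\dim F$ over $K$ carrying a faithful action of $H \cong (\Z/p\Z)^{\dim F}$. By Birkar's boundedness, $\aut(F)$ is a $K$-group scheme with both $\dim \aut^0(F)$ and $|\pi_0 \aut(F)|$ bounded in terms of $n$. For $p$ larger than these bounds we have $H \subseteq L(K)$ with $L := \aut^0(F)$; since $F$ is rationally connected, $L$ is a linear algebraic $K$-group. For $p$ exceeding the torsion primes of such $L$, the centralizer $C := Z_L(H)^0$ is a reductive $K$-group of the same absolute rank as $L$ and contains $H$ in its center. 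The identity component $T$ of $Z(C)$ is a $K$-torus, and for $p$ coprime to $|\pi_0 Z(C)|$ we have $H \subseteq T(K)$; hence $\dim T \geq \rk(H) = \dim F$, while the faithfulness of $T$ on $F$ forces $\dim T = \dim F$, so $T$ acts with open orbit.

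Finally, $T$ is a $K$-torus of dimension $\dim F$ containing $H$ of full $p$-rank in its $K$-points. The Galois action on the character lattice $X^*(T)$ factors through a finite subgroup $W \subseteq \gl_{\dim T}(\Z)$; Minkowski's theorem bounds $|W|$ in terms of $n$, and the triviality of $W$ modulo $p$ (consequence of $T[p](K)$ being constant of full rank) forces $W = 1$ once $p$ exceeds this bound, so $T$ splits over $K$. The open $T$-orbit in $F$ is then a torsor under the still-split quotient torus $T/\Sigma$ by the generic stabilizer $\Sigma$, trivial by Hilbert 90, so $F$ contains a dense open subset isomorphic to $\gm^{\dim F}$ over $K$ and is $K$-rational.

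The main obstacle is the middle step: arranging that $H$ is contained in a torus of $\aut^0(F)$ of dimension $\dim F$ defined over $K$, rather than merely over $\bar K$. This is handled by passing to the centralizer $Z_L(H)^0$, whose center contains $H$ and has a canonical identity-component torus defined over $K$. The constant $R(n)$ is then defined inductively as the maximum of the Prokhorov--Shramov constant $L(n)$, the boundedness constants from Birkar, the torsion primes of the relevant linear algebraic groups, and Minkowski's bound on finite subgroups of $\gl_n(\Z)$.
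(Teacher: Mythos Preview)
Your overall strategy coincides with the paper's: induct on $n$ via the $G$-equivariant MMP, use Theorem~\ref{psrank} to force both the fibre group and the base group to have maximal rank, handle the base by induction, and reduce to showing that the generic fibre $F$ is $K$-rational (with $K=k(S)$) by producing a split $K$-torus of dimension $\dim F$ inside $\aut^0(F)$, then finish with Hilbert~90 as in Theorem~\ref{split}. Your splitting argument via Minkowski's bound is exactly Lemma~\ref{splittorus}/Corollary~\ref{split1}.

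The substantive difference is in how the $K$-torus containing $H$ is built. The paper (Theorem~\ref{group}, via Lemma~\ref{uniquemax}) first passes to a Levi subgroup of $L:=\aut^0(F)$, then over $\bar K$ intersects \emph{all} maximal tori containing $H$; this intersection is Galois-stable by construction, its identity component is automatically a torus, and Lemma~\ref{center} bounds its component group via a root-datum argument. You instead take $T=Z(C)^0$ with $C:=Z_L(H)^0$, which is also Galois-stable. However, your claim that $C$ is \emph{reductive} is unjustified: $L$ need not be reductive, and ``torsion primes of $L$'' has no standard meaning in that generality. The paper flags precisely this pitfall in Example~\ref{solvable} and Remark~\ref{psquest}(2), and circumvents it by the Levi passage. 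For a concrete failure of your assertion, take $L=\gr_a\times\gr_m$ and $H=\mu_p\subset\gr_m$: then $C=L$ and $Z(C)^0=L$ is not a torus, for any $p$.

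The repair is minor and stays within your framework: $Z(C)^0$ is connected commutative linear, hence a direct product of a torus and a unipotent group; $H$, consisting of semisimple elements, lies in the torus factor, which is characteristic in $Z(C)^0$ and therefore defined over $K$. One still needs Birkar boundedness to bound the weights of a maximal torus on $\mathrm{Lie}(L)$ uniformly in $n$, so that for $p$ beyond this bound the torus factor of $Z(C)^0$ has dimension $\dim F$. Alternatively, pass to a Levi subgroup at the outset, as the paper does. Once patched, your centralizer construction is arguably more direct than the paper's, since it avoids the root-system combinatorics of Lemma~\ref{center}.
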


We will prove a generalized form in Section \ref{main}. In the course of proving Theorem \ref{rationality}, we answer a question of Prokhorov and Shramov \cite[Questions 1.9]{ps3}, see Remark \ref{psquest} for details. As an application, we strengthen a theorem of S. Cantat \cite{c}, which characterizes rational varieties by their birational automorphism groups. In contrast with Cantat, who considered actions of nilpotent vector groups, we are primarily interested in actions of finite groups.

\begin{thm} (see Theorem \ref{cantat})
	Let $X$ be an $n$-dimensional algebraic variety over an algebraically closed field $k$ of characteristic zero. Assume that $\bir(X)$ contains a subgroup isomorphic (as an abstract group) to the Cremona group $\mathrm{Cr}_k(n)$, then $X$ is rational.
\end{thm}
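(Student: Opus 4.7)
The plan is to reduce the theorem to Theorem \ref{rationality} by exhibiting large elementary abelian $p$-subgroups inside $\bir(X)$ coming from the Cremona group's standard torus, and then arguing that $X$ must be rationally connected. Since $k$ is algebraically closed of characteristic zero, the group $k^\times$ is divisible and its $p$-torsion is $\mu_p \cong \Z/p\Z$ for every prime $p$. The Cremona group $\mathrm{Cr}_k(n)$ contains $\pgl_{n+1}(k)$, and hence its standard diagonal torus $(k^\times)^n$, whose $p$-torsion is $(\Z/p\Z)^n$. An abstract embedding $\mathrm{Cr}_k(n) \hookrightarrow \bir(X)$ therefore yields a subgroup of $\bir(X)$ isomorphic to $(\Z/p\Z)^n$ for every prime $p$, in particular for any $p > R(n)$; once $X$ is known to be rationally connected, Theorem \ref{rationality} concludes.

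The essential remaining step is the rational connectedness of $X$. Consider the MRC fibration $\pi : X \dashrightarrow Z$, where $Z$ is non-uniruled of dimension $d$ and the generic fiber $F$ is rationally connected of dimension $n-d$ over $k(Z)$. Functoriality of the MRC quotient yields a homomorphism $\rho : \bir(X) \to \bir(Z)$ whose kernel embeds into $\bir(F)$. Suppose for contradiction that $d \ge 1$. Choose $p > \max(R(n), L(n-d))$ and restrict $\rho$ to the subgroup $H \cong (\Z/p\Z)^n$ produced above. Theorem \ref{psrank} applied to $F/k(Z)$ bounds the rank of $H \cap \ker\rho$ by $n-d$, so $\rho(H) \subset \bir(Z)$ has $p$-rank at least $d$.

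To derive a contradiction for all large $p$ simultaneously, I would exploit the fact that $\bir(Z)$ is highly constrained for non-uniruled $Z$: after a suitable birational modification, the connected component $\aut^0(Z)$ is an abelian variety of dimension $\le d$ and the discrete quotient satisfies Jordan-type bounds. The rigidity of this target should be incompatible with being the image of the full abstract group $\mathrm{Cr}_k(n)$: for instance, $\mathrm{Cr}_k(n)$ contains the infinite simple group $\mathrm{PSL}_{n+1}(k)$, which cannot embed into $\bir(Z)$ for non-uniruled $Z$. I expect this forces $\rho$ to vanish on $\mathrm{Cr}_k(n)$, so $\mathrm{Cr}_k(n) \hookrightarrow \bir(F)$; then Theorem \ref{psrank} applied to $F$ bounds the $p$-rank of $\bir(F)$ by $n-d < n$, contradicting the presence of $(\Z/p\Z)^n$. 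The main obstacle is exactly this last step, requiring careful control over abstract-group homomorphisms from $\mathrm{Cr}_k(n)$ into birational automorphism groups of non-uniruled varieties.
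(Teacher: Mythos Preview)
Your overall strategy matches the paper's exactly: embed $(\Z/p\Z)^n$ via the diagonal torus of $\pgl(n+1,k)\subset\mathrm{Cr}_k(n)$, reduce to rational connectedness, and then apply Theorem~\ref{rationality}. The gap you flag is precisely where the paper's argument is needed, and the paper closes it cleanly with the ingredients you already list but do not assemble.

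Here is how the paper finishes. Replace $Z$ by a quasi-minimal model, so that $\bir_0(Z)=\aut_0(Z)$ is an abelian variety (Theorem~\ref{bir}) and the component group $\bir(Z)/\bir_0(Z)$ has bounded finite subgroups (Lemma~\ref{component}). Work not with all of $\mathrm{Cr}_k(n)$ but with $G=\pgl(n+1,k)$, which is a non-abelian simple group for $n\ge 1$ and $k$ algebraically closed. The induced map $G\to\bir(Z)$ is then either trivial or injective. If injective, the normal subgroup $G\cap\bir_0(Z)$ is either all of $G$ (forcing $G$ abelian, absurd) or trivial (forcing $G\hookrightarrow \bir(Z)/\bir_0(Z)$, impossible since $G$ contains $(\Z/p\Z)^n$ for every prime $p$ while the component group has bounded finite subgroups). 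Hence $G$ acts trivially on $Z$, so $G$ embeds into $\bir(X_\eta)$; now Theorem~\ref{psrank} applied to the rationally connected generic fibre gives $n\le\dim X_\eta$, so $\dim Z=0$.

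Two remarks on your write-up. First, your detour bounding the $p$-rank of $\rho(H)$ from below is unnecessary: once you use simplicity of $G$, no quantitative control on $\rho(H)$ is needed. Second, you should not aim to show $\rho$ vanishes on all of $\mathrm{Cr}_k(n)$: the Cremona group is not simple for $n\ge 2$, so that stronger claim is both harder and false in general. Vanishing on $\pgl(n+1,k)$ is exactly what is required, since that subgroup already carries the $(\Z/p\Z)^n$'s.
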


We also have an affirmative answer to Question \ref{finitecontinuous} for non-unirulded varieties.

\begin{thm} \label{nruled} (see Theorem \ref{nruled1})
	Let $X$ be a non-uniruled variety over an algebraically closed field of characteristic zero. Then there exists a constant $b(X)$ (depending on the birational class of $X$) such that $\bir(X)$ contains an element of order greater than $b(X)$ if and only if $X$ is birational to a normal projective variety having an action of an abelian variety $A$.
\end{thm}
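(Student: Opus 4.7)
The `if' direction is formal and I would dispatch it immediately: if $X$ is birational to a normal projective variety $X'$ carrying a faithful action of a positive-dimensional abelian variety $A$, then $A(k)$ embeds into $\bir(X')=\bir(X)$, and since $k$ is algebraically closed of characteristic zero, $A(k)$ contains elements of every positive integer order. Hence for any constant $b(X)$ whatsoever, such an $X$ admits birational self-maps of order greater than $b(X)$.

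For the `only if' direction I would exhibit a threshold $b(X)$ such that a torsion element of order $>b(X)$ in $\bir(X)$ forces $X$ to be birational to a variety with a positive-dimensional abelian variety action. The plan is to reduce along the Iitaka fibration to the Kodaira dimension zero case and then apply Matsumura's theorem to the identity component of the automorphism group of a well-chosen model. First I would pass to a smooth projective model $\widetilde X$, still non-uniruled, and consider the Iitaka fibration $\pi\colon\widetilde X\dashrightarrow Y$. Because $\pi$ is canonical it is $\bir(X)$-equivariant, and $Y$ is of general type, so $\bir(Y)$ is finite with cardinality bounded by some $N_Y$. Thus any $g\in\bir(X)$ of order $m$ has a power $g^{N_Y!}$ of order at least $m/N_Y!$ lying in the relative group $\bir(\widetilde X/Y)$. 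Next I would run a relative MMP (or invoke the relative canonical model when it is available) to produce a normal projective model $\overline X\to Y$ on which a finite-index subgroup of the torsion of $\bir(\widetilde X/Y)$ acts by biregular automorphisms, using that the generic fiber has Kodaira dimension zero and so its relative minimal models agree up to pseudo-automorphisms.

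On such an $\overline X$, the scheme $\aut(\overline X)$ is of finite type, and because $\overline X$ is non-uniruled its identity component $\aut^\circ(\overline X)$ contains neither $\gm$ nor $\ga$; by Matsumura's theorem $\aut^\circ(\overline X)$ is then an abelian variety $A$. The component group $\aut(\overline X)/\aut^\circ(\overline X)$ maps, with finite kernel (a Lieberman-type statement), to $\gl(\ns(\overline X))$, whose torsion has exponent bounded by a Minkowski constant $M$ in terms of the rank of $\ns(\overline X)$. Setting $b(X):=N_Y!\cdot M\cdot C$, where $C$ absorbs the index of the subgroup that acts biregularly on $\overline X$, any element of $\bir(X)$ of order greater than $b(X)$ must produce a torsion element of $\aut^\circ(\overline X)=A$ of positive order, forcing $\dim A>0$ and hence the desired abelian variety action on $\overline X$. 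The hardest step will be the `good model' reduction: guaranteeing that torsion elements of $\bir(\widetilde X/Y)$ of arbitrarily large order are realized as biregular automorphisms of a single projective model $\overline X$, which requires an equivariant form of the relative MMP together with finiteness results for pseudo-automorphism groups modulo numerically trivial actions.
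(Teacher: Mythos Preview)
Your `if' direction is fine and matches the paper. The `only if' direction, however, has a genuine gap and also diverges substantially from the paper's argument.

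The first problem is your use of the Iitaka fibration. You assert that $\widetilde X$ admits an Iitaka fibration with base $Y$ of general type, but this presupposes $\kappa(X)\geq 0$. For non-uniruled varieties this is precisely the non-vanishing (or weak abundance) conjecture, which is open in dimension $\geq 4$. Nothing in the hypotheses rules out $\kappa(X)=-\infty$, and you make no provision for that case. The paper avoids this entirely: it never invokes the Iitaka fibration or any form of abundance.

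The second problem is the one you yourself flag as hardest: producing a \emph{single} normal projective model $\overline X$ on which a cofinite portion of the torsion in $\bir(\widetilde X/Y)$ acts biregularly. Equivariant MMP regularizes the action of one finite group at a time; it does not by itself yield a model that works uniformly for elements of arbitrarily large order. Your appeal to ``the relative canonical model when it is available'' again requires abundance for the fibres, and your appeal to ``finiteness results for pseudo-automorphism groups modulo numerically trivial actions'' is exactly the content that needs to be supplied, not assumed.

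The paper's route is to pass once and for all to a \emph{quasi-minimal model} $Y$ in the sense of Prokhorov--Shramov (which exists for any non-uniruled variety, no abundance needed). On such a $Y$ every birational self-map is an isomorphism in codimension one, so by Hanamura $\bir(Y)$ is a group scheme with $\bir_0(Y)=\aut_0(Y)$, and the latter is an abelian variety because $Y$ is non-uniruled. The whole argument then reduces to bounding finite subgroups of the component group $\bir(Y)/\bir_0(Y)$, which the paper does via the action on $\wcl(Y)/\wcl_0(Y)$ together with a Minkowski-type bound and Lemma~\ref{codim}. This is morally the same endgame you sketch (Matsumura, N\'eron--Severi action, Minkowski), but the quasi-minimal model packaging is what makes the ``good model'' step go through unconditionally.
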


The structure of those varieties having actions of abelian varieties are well-understood. Thus we are able to give a concise characterization of algebraic threefolds whose birational automorphism groups are not Jordan. These varieties were already studied in \cite{ps4}, where they were classified into four types. Our approach does not use arithmetic of elliptic curves over function fields.

\begin{thm} \label{threefolds} (see Theorem \ref{threefolds1})
	Let $X$ be a threefold over an algebraically closed field of characteristic zero. Then $\bir(X)$ is not Jordan if and only if $X$ is birational to $\BP^1 \times (E \times^G F)$, where $E$ is an elliptic curve, or $\BP^1 \times A$ where $A$ is an abelian surface.
\end{thm}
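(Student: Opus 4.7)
I would combine Theorems \ref{rationality} and \ref{nruled} with the Prokhorov--Shramov Jordan theorem for $\cre_3(k)$, splitting into cases on the uniruledness of $X$ and, in the uniruled case, on the dimension of the base of the MRC fibration.

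For the easy direction, I would exhibit explicit non-Jordan subgroups in each listed model. On $\BP^1 \times A$ with $A$ an abelian surface, the translation action of the $p$-torsion $A[p] \cong (\Z/p\Z)^4$ combined with an order-$p$ subgroup of $\pgl_2(k)$ on $\BP^1$ and with a nontrivial automorphism of $A$ (e.g.\ the involution $a \mapsto -a$) yields, for each prime $p$, a finite subgroup in which every normal abelian subgroup has index growing with $p$, defeating any uniform Jordan bound. A parallel construction using $E[p]$ and automorphisms of $E$ handles $\BP^1 \times (E\times^G F)$.

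For the main direction, assume $\bir(X)$ is not Jordan. If $X$ is non-uniruled, Theorem \ref{nruled} produces a birational model carrying an action of an abelian variety $A$ of positive dimension. Since $\dim X = 3$, one analyzes the cases $\dim A \in \{1,2,3\}$ via Chevalley's structure theorem: the case $\dim A = 3$ is excluded because $\bir$ of an abelian threefold is Jordan, and the remaining cases yield, after further birational adjustment, one of the listed product forms. If $X$ is uniruled, consider its MRC fibration $\pi: X \dashrightarrow B$. When $X$ is rationally connected, Theorem \ref{rationality} combined with the boundedness arguments of this paper recovers Jordanness of $\bir(X)$, contradicting the hypothesis, so $1 \leq \dim B \leq 2$ and $B$ is non-uniruled. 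After a $G$-equivariant resolution making $\pi$ regular and $\bir(X)$-equivariant on a suitable model, transfer the non-Jordan property to $B$ and apply Theorem \ref{nruled} again: $B$ is birational to an abelian variety. Finally, the rational fibers (curves if $\dim B = 2$, surfaces if $\dim B = 1$) can be rigidified to yield one of the stated product or twisted-product forms.

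The main obstacle is the descent through the MRC fibration: a priori $\bir(X)$ does not preserve $\pi$, and one must deploy $G$-equivariant MMP-style arguments to ensure that the non-Jordan $p$-subgroups pass faithfully to $B$. A secondary difficulty is the rigidification step over the abelian base---extracting the explicit form $\BP^1 \times (E \times^G F)$ rather than a general rational fibration---which relies on the structure theory of rational fibrations over elliptic bases together with the fact that the total space must accommodate the large $p$-subgroups we started with.
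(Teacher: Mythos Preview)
Your case split has a few errors. The non-uniruled case is vacuous: by Prokhorov--Shramov, $\bir(X)$ is Jordan for non-uniruled $X$, so this case never occurs, and in any event a non-uniruled threefold cannot be birational to any of the listed $\BP^1\times(\cdots)$ forms. In the uniruled case, what descends to the MRC base $B$ is not the non-Jordan property---$\bir(B)$ \emph{is} Jordan since $B$ is non-uniruled---but rather the property of having unbounded finite subgroups: the group-theoretic input is that in an extension $1\to G'\to G\to G''\to 1$ with $G'$ and $G''$ Jordan and $G''$ of bounded rank, $G$ not Jordan forces \emph{both} $G'$ and $G''$ to have unbounded finite subgroups. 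Theorem~\ref{nruled} then only gives that $B$ is birational to $A\times^H F$, not to an abelian variety; when $\dim B=2$ this already covers both the abelian-surface and the $E\times^H F$ possibilities in the statement.

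The decisive gap is your ``rigidification'' step. Knowing that the closed fibres are geometrically rational does not make the generic fibre $X_\eta$ rational over the non-closed field $k(B)$, and that is exactly what you need to split off a $\BP^1$ (or $\BP^2$) factor birationally. The paper's mechanism is to exploit the faithful $G'$-action on $X_\eta$: since $G'$ has unbounded finite subgroups, it contains elements of arbitrarily large order, and then one invokes rationality criteria over $k(B)$---Bandman--Zarhin (Theorem~\ref{bz}) when $\dim X_\eta=1$, and Graber--Harris--Starr for a $k(B)$-point together with the del Pezzo criterion of \cite[Theorem~1.6]{ps4} when $\dim X_\eta=2$. Without naming these ingredients, the passage from ``rationally connected generic fibre with large automorphisms'' to ``birational product'' is not justified.
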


The above theorem motivates us to raise the following conjectural characterization of algebraic varieties whose birational automorphism groups are not Jordan. Recall that a \emph{semi-abelian variety} is an extension of an abelian variety by an algebraic torus. We say that a semi-abelian variety is \emph{non-trivial} if both the abelian variety and the torus have positive dimension.

\begin{conj}
	Let $X$ be an algebraic variety over an algebraically closed field. Then $\bir(X)$ is not Jordan if and only if $X$ is birational to some variety having a faithful action of a non-trivial semi-abelian variety.
\end{conj}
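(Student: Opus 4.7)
I would address the two implications separately, with the converse as the substantive direction.

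For the ``if'' direction, suppose $X$ is birational to $Y$ carrying a faithful action of a non-trivial semi-abelian variety $S$ sitting in an extension $1\to T\to S\to A\to 1$ with $\dim T,\dim A\geq 1$. The plan is to exhibit a sequence of finite subgroups of $\bir(Y)$ violating any uniform Jordan bound, generalizing the classical Zarhin-type construction on $\BP^1\times E$. For each sufficiently large prime $p$, the torsion $S[p]\cong(\Z/p\Z)^{2\dim A+\dim T}$ supplies a commuting family of birational automorphisms. To this I adjoin an element $g\in\bir(Y)$ of order $p$ normalizing $S$ and conjugating $S[p]$ non-trivially. Such a $g$ is produced by passing to the $T$-quotient $Y\dashrightarrow Y/T$, which inherits an $A$-action, and twisting the torus coordinate along translations by a suitable point of $A[p]$ via a rational character. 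The resulting non-abelian subgroup has order growing in $p$ with no abelian subgroup of index less than $p$, defeating any uniform Jordan constant.

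For the converse, assume $\bir(X)$ is not Jordan and run MMP on a suitable model of $X$. If $X$ is non-uniruled, Theorem \ref{nruled} supplies an action of an abelian variety $A$ on a birational model $Y$. I would then argue that the action must in fact involve a positive-dimensional torus as well: otherwise $\bir(Y)$ would be an extension of a linear algebraic group of bounded rank (hence Jordan) by $A(k)$ and therefore itself Jordan, contradicting the hypothesis. This extra torus together with $A$ yields the semi-abelian action. If $X$ is uniruled, one passes to a Mori fiber space $X'\to B$ and combines induction on dimension (applied to $B$) with Theorem \ref{rationality}: under a maximal-rank $p$-subgroup assumption the generic Fano fiber is rational and carries a $\gm^{\dim\text{fiber}}$-action supplying one factor, while the base inductively supplies the abelian factor.

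The chief obstacle is assembling the torus and abelian data into a single faithful action of one semi-abelian variety on a single birational model of $X$. Such extensions are classified by cocycles that may a priori be non-trivial, and showing these can always be untwisted birationally---or equivalently verifying faithfulness of the assembled $S$-action---is a non-trivial regularization problem in the spirit of, but going beyond, the techniques of Section~\ref{main}. A secondary difficulty in the non-uniruled case is ruling out exotic non-Jordan behaviour coming from non-abelian finite symmetries acting alongside $A$, which would require extending the boundedness results invoked in this paper to an equivariant setting.
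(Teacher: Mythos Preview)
The statement you are attacking is presented in the paper as a \emph{conjecture}; the paper offers no proof, only motivation from the threefold case (Theorem~\ref{threefolds}). There is therefore nothing to compare your proposal against, and what you have written should be read as a programme toward an open problem rather than a reconstruction of an existing argument.

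That said, two substantive comments on the proposal itself.

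First, your case split in the ``only if'' direction is miscalibrated. The paper recalls (just before Theorem~\ref{threefolds1}) the Prokhorov--Shramov results that $\bir(X)$ is Jordan whenever $X$ is non-uniruled or rationally connected. Hence if $\bir(X)$ is not Jordan, $X$ is automatically uniruled and not rationally connected; your non-uniruled branch is vacuous, and the argument you give there (``otherwise $\bir(Y)$ would be \dots\ Jordan, contradicting the hypothesis'') is not extracting new information. All of the content of the conjecture lives in the mixed case, where the MRC base is a positive-dimensional non-uniruled variety and the generic fibre is positive-dimensional rationally connected. Your inductive Mori-fibre-space scheme is the natural shape of attack here, but invoking Theorem~\ref{rationality} requires a $p$-subgroup of \emph{maximal} rank on the fibre, which failure of the Jordan property does not by itself supply; and even when it does, you obtain a torus acting on the generic fibre over $k(B)$ together with (inductively) some structure on $B$, with no mechanism for fusing these into a single semi-abelian action on one model of $X$. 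You correctly flag this gluing step as the crux, and nothing in the paper resolves it either.

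Second, your ``if'' direction is in the spirit of Zarhin's theta-group construction and is morally right, but the passage ``pass to the $T$-quotient $Y\dashrightarrow Y/T$ \dots\ twist the torus coordinate via a rational character'' presupposes a birational product decomposition $Y\sim \gm^r\times Z$ with $A$ acting on $Z$. Over an algebraically closed field Rosenlicht's theorem (Theorem~\ref{split}) does give the splitting by $T$, and $A=S/T$ does act on the rational quotient, so the construction can be made to work; but this deserves to be said explicitly, together with the verification that the resulting $p$-groups are genuinely non-abelian (i.e.\ that the commutator pairing is non-degenerate for infinitely many $p$).
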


Question \ref{finitecontinuous} becomes involved when the rank of $H$ is not maximal or when the variety is of mixed type. If it were fully settled for a rationally connected variety $X$, then the continuous group should be an algebraic torus, and $X$ is birational to a product $\BP^r \times Y$ for some variety $Y$. Some special cases have been studied in \cite{g} and \cite{sv}.\\

\noindent \emph{Sketch of ideas.} Let's outline the proof of Theorem \ref{rationality} by treating the case of threefolds over the field of complex numbers.

%by answering : \emph{Let $X$ be a projective rationally connected variety of dimension $r$, and $G \subset \aut(X)$ be a $p$-subgroup that cannot be generated by less than $r$ elements. Suppose that $p$ is sufficiently large. Is it true that $X$ is rational? Is it true that $G$ is contained in an $r$-dimensional torus acting on $X$?}

By completion, regularization and desingularization, we may start with a nonsingular projective rationally connected variety on which $G$ acts biregularly. Running a $G$-MMP, we terminate with either a $G$-Fano threefold or a nontrivial $G$-Mori fibre space \cite{ps1}.

For a Fano threefold, BABB's boundedness theorem shows that when $p$ is large enough, $G$ is contained in a maximal torus (of dimension $3$) of the neutral component of the automorphism group. Hence this Fano threefold is toric, and rationality follows.

For a $G$-Mori fibre space $f: X \to Z$, the group $G$ decomposes as
	$$
		0 \to G' \to G \to G" \to 0,
	$$
	where $G"$ and $G'$ are $p$-groups acting faithfully on $Z$ and the generic fibre $X_\eta$ respectively. Notice that $Z$ is rational since we are working over $\C$. Everything will be done if we could show that $X_{\eta}$ is rational over the function field $\C(Z)$. At this point, we are inspired by the following rationality criteria of conics by T. Bandman and Yu. Zarhin, and del Pezzo surfaces by V. Iskovskikh and Yu. Manin.

	\begin{thm} (\cite[Corollary 4.12]{bz}) \label{bz}
		Let $k$ be a field of characteristic zero that contains all roots of unity, and $C$ be a nonsingular projective curve of genus $0$ over $k$. Assume that $C$ is not $k$-rational. Then the order of any finite order element of $\aut_k(C)$ divides $2$, and the order of any finite subgroup $G \subset \aut_k(C)$ divides $4$.
	\end{thm}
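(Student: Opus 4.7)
The plan is to work geometrically over the algebraic closure $\bar k$. After base change, $C_{\bar k} \cong \BP^1_{\bar k}$ and $\aut_{\bar k}(C_{\bar k}) \cong \pgl_2(\bar k)$, so any nontrivial element $\sigma \in \aut_k(C)$ of finite order $n$ becomes a diagonalizable element of $\pgl_2(\bar k)$ with exactly two geometric fixed points $P_1, P_2$. In suitable local coordinates $\sigma$ acts on the tangent lines $T_{P_1} C_{\bar k}$ and $T_{P_2} C_{\bar k}$ by multiplication by $\zeta$ and $\zeta^{-1}$ respectively, where $\zeta$ is a primitive $n$-th root of unity.

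The key observation is that when $n > 2$ the two fixed points are intrinsically distinguished by these eigenvalues, since $\zeta \neq \zeta^{-1}$. The unordered pair $\{P_1, P_2\}$ is stable under $\gal(\bar k/k)$ because $\sigma$ is $k$-rational; but the eigenvalue labelling is also Galois-equivariant, because $k$ contains all roots of unity and so $\zeta, \zeta^{-1} \in k$ are Galois-fixed. Consequently no Galois element can swap $P_1$ with $P_2$, and each $P_i$ is therefore a $k$-rational point of $C$. A genus-$0$ curve with a rational point is isomorphic to $\BP^1_k$, contradicting non-rationality of $C$. Therefore every finite-order element of $\aut_k(C)$ has order dividing $2$, which is the first assertion.

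For the subgroup statement, any finite subgroup $G \subset \aut_k(C) \hookrightarrow \pgl_2(\bar k)$ consists of elements of order at most $2$ by the first part, so $G$ is an elementary abelian $2$-group. Invoking the classification of finite subgroups of $\pgl_2(\bar k)$ in characteristic zero (cyclic, dihedral, $A_4$, $S_4$, $A_5$), the largest elementary abelian $2$-subgroup is the Klein four-group sitting inside the dihedral group $D_2$, so $|G|$ divides $4$. The only technical point that needs care is the Galois-equivariance of the eigenvalue assignment used above; this is automatic because the tangent action of $\sigma$ at $\tau(P_i)$ is obtained from the action at $P_i$ by applying $\tau$, and this leaves the eigenvalue fixed precisely because it lies in $k$. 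I expect no other obstacle — once the eigenvalue dichotomy for $n > 2$ is set up, the rest is classical.
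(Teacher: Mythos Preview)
Your proof is correct. Note, however, that the paper does not actually prove Theorem~\ref{bz}: it is quoted from Bandman--Zarhin as a black box. The nearest thing in the paper is the Example following Theorem~\ref{fano}, which proves only the special case of an automorphism of \emph{prime} order $p\ge 3$, and by a different route: there the element is placed in the unique maximal torus of $\pgl_2(k_a)$ containing it, uniqueness forces this torus to be Galois-stable and hence defined over $k$, Corollary~\ref{split1} makes it split, and Rosenlicht's Theorem~\ref{split} gives rationality.

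Your argument is more elementary and proves the full statement. Instead of tori you look directly at the two geometric fixed points and distinguish them, for $n>2$, by the tangent eigenvalues $\zeta$ versus $\zeta^{-1}$; since these lie in $k$ they are Galois-fixed, so no Galois element can swap the points, and each is $k$-rational. The Galois-equivariance step you flag is indeed routine: because $\sigma$ is defined over $k$, the tangent eigenvalue at $\tau(P_i)$ is $\tau(\lambda_i)$, which equals $\lambda_i$ once $\lambda_i\in k$. The subgroup bound via the classification of finite subgroups of $\pgl_2$ is also fine; alternatively one can argue directly that two distinct commuting involutions in $\pgl_2(\bar k)$ already generate a Klein four-group and that no third independent commuting involution exists, since any involution commuting with a given one must either fix or swap its two fixed points.
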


	\begin{thm} (\cite[Theorem 2.4, Lemma 2.5]{ps4}) \label{im}
		Let $S$ be a del Pezzo surface over a field $k$.

			(1) $|\aut_k(S)| \leq 696729600$ if $K_S^2 \leq 5$.

			(2) Assume that $S(k) \neq \emptyset$. Then $S$ is $k$-rational if $K_S^2 \geq 5$.
	\end{thm}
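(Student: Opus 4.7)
The plan is to prove the bound in (1) by exploiting the faithful action of $\aut_k(S)$ on the geometric Picard lattice, identifying $K_S^\perp$ with one of the exceptional root lattices, and to prove the rationality in (2) by a classical case analysis on the degree $d = K_S^2 \in \{5,6,7,8,9\}$.

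For (1), I would pass to the algebraic closure and consider $\bar S = S \times_k \bar k$, a smooth del Pezzo surface of degree $d \leq 5$ isomorphic to the blow-up of $\BP^2$ at $9-d$ points in general position. Any $k$-automorphism preserves $K_S$ and so acts isometrically on the orthogonal complement $K_S^\perp \subset \pic(\bar S)$; with the intersection form, this lattice is the root lattice of type $A_4$, $D_5$, $E_6$, $E_7$, $E_8$ for $d = 5,4,3,2,1$ respectively, and its finite isometry group is the Weyl group $W(R)$. The natural map $\aut_k(S) \to W(R)$ has trivial kernel whenever $d \leq 5$: an automorphism acting trivially on $\pic(\bar S)$ must fix each class of a $(-1)$-curve, hence fix each of the $9-d \geq 4$ blown-up points, and therefore descend to an automorphism of $\BP^2$ fixing at least four points in general position, which forces the identity. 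Since $|W(E_8)| = 696{,}729{,}600$ dominates the orders of the other Weyl groups in the list, the stated bound follows.

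For (2), I would argue case by case. When $d = 9$, $S$ is a Severi--Brauer surface, and a $k$-point trivializes the corresponding class in $H^1(k, \pgl_3)$, so $S \cong \BP^2_k$. When $d = 8$, $S$ is either a form of $\BP^1 \times \BP^1$, a form of $\mathbb F_1$, or $\mathbb F_n$ itself; a $k$-point yields rationality through a standard quadric or blow-down descent. For $d = 7$, $S$ is the blow-up of a pair of points (possibly Galois conjugate) on a degree $9$ surface, and contracting a $(-1)$-curve defined over $k$ reduces to the previous case. For $d = 6$, the $k$-point is used to produce a conic bundle structure on $S$ with a section, giving a birational map to $\BP^1 \times \BP^1$; alternatively, one contracts three pairwise disjoint $(-1)$-curves Galois-stably to reach a split form of $\BP^2$. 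Finally, for $d = 5$, a classical theorem of Enriques asserts $k$-rationality over any perfect field, even without a rational point, by exploiting the canonical pencils of conics on a quintic del Pezzo surface.

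The main obstacle is the descent for $d = 6$, where one must track the Galois action on the hexagon of $(-1)$-curves to produce a $k$-defined conic bundle admitting a $k$-rational section. The remaining cases reduce to well-known identifications of twisted forms (Severi--Brauer surfaces, quadrics, Hirzebruch surfaces), once a $k$-point has been supplied, and the bound in (1) comes down entirely to the lattice identification with $K_S^\perp$ together with the faithfulness of the action on $\pic(\bar S)$ for $d \leq 5$.
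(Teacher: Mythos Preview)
The paper does not supply its own proof of this theorem: it is quoted verbatim as \cite[Theorem 2.4, Lemma 2.5]{ps4}, and the underlying arguments are classical (Manin, Iskovskikh, Enriques, Swinnerton--Dyer). So there is no ``paper's proof'' to compare against; your sketch is essentially a reconstruction of those classical arguments, and it is broadly correct.

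A few small points. In part~(1) your injection $\aut_k(S)\hookrightarrow W(R)$ is the standard one, and the faithfulness argument for $d\le 5$ is right; you might note explicitly that the image lands in the Weyl group (not merely the orthogonal group of $K_S^\perp$) because automorphisms permute the classes of $(-1)$-curves, which are exactly the exceptional vectors. In part~(2), your $d=8$ case contains a slip: $\mathbb F_n$ for $n\ge 2$ is never del Pezzo, so the only possibilities over $\bar k$ are $\mathbb F_0=\BP^1\times\BP^1$ and $\mathbb F_1$; both are handled by the argument you indicate. Your treatment of $d=6$ is, as you acknowledge, the delicate step: the cleanest route is not via a conic bundle but to blow up a $k$-point lying off the hexagon of $(-1)$-curves, obtaining a degree~$5$ del Pezzo and invoking Enriques; one then has to deal separately with the case that every $k$-point lies on a $(-1)$-curve, which forces a Galois-stable $(-1)$-curve or pair and hence a contraction to higher degree. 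For $d=5$ the result (Enriques, later Swinnerton--Dyer) actually shows $S(k)\neq\emptyset$ automatically, so the hypothesis is vacuous there, consistent with your remark.
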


If $\dim X_\eta = 1$, the generic fibre is a nonsingular projective curve of genus $0$. It follows from Theorem \ref{bz} that $X_\eta$ is $\C(Z)$-rational whenever $p > 2$. If $\dim X_\eta = 2$, the generic fibre is a del Pezzo surface over $k$ with a $k$-rational point (cf. \cite{ghs}). In view of Theorem \ref{im}, we see that $X_\eta$ is rational over $k$ if $p > 696729600$. In any case, we deduce that $X$ is rational over $\C$.

Higher dimensional cases follow a similar strategy, provided that we have an effective rationality criterion for Fano varieties over a non-closed field that is in a similar manner as Theorem \ref{bz} and Theorem \ref{im} above. We will do this in Section \ref{lag}. Another important ingredient of our approach is boundedness of mildly singular Fano varieties of fixed dimension. This is classically known in dimension $\leq 3$ \cite{am, kmmt} and recently extended to higher dimensions by the groundbreaking work of Birkar \cite{b1, b2}.

The proof of Theorem \ref{nruled} is based on a closer study of the birational automorphism group of a quasi-minimal model, and take the advantage of actions of abelian varieties. This will be done in Section \ref{sec1}. Theorem \ref{threefolds} follows readily from Theorem \ref{nruled} and a rationality criterion of geometrically rational surfaces over nonclosed field.\\

We will work over fields of \emph{characteristic zero} throughout this paper, though some results also hold in positive characteristics. There are various constants appearing in the sequel (see the list below), some of which can be carried out explicitly. To keep our ideas clear, we will not make effort on calculating their explicit values.\\

\noindent \textbf{List of constants:}
$$
\begin{array}{rlcrl}
	A(n): & \mbox{Theorem \ref{fano} (1)} & \hspace{20mm}& L(n): & \mbox{Theorem \ref{psrank}}\\
	B(n): & \mbox{Lemma \ref{torus}} && L(X): & \mbox{Theorem \ref{prank1}}\\
	C(n): & \mbox{Theorem \ref{babb} after} && M(n): & \mbox{Theorem } \ref{mink}\\
	D(n): & \mbox{Lemma \ref{uniquemax}}&& R(n): & \mbox{Theorem \ref{rationalrcv}} \\
	E(n): & \mbox{Lemma \ref{center}}&& R(X): & \mbox{Theorem  \ref{fano} (2)}\\
	F(n): & \mbox{Theorem \ref{fixed}} && b(X): & \mbox{Lemma  \ref{component}}\\
	G(n): & \mbox{Theorem \ref{group}} && \nu(X), \delta(X): & \mbox{Definition \ref{vector}}\\
	%J(n): & \mbox{Theorem \ref{jordan}} & &\delta(X): & \mbox{Definition \ref{vector}}
\end{array}
$$

\noindent \textbf{Acknowledgement.} This work is supported by NSFC No.11701462. The author is grateful to Professors Caucher Birkar, Michel Brion, Philippe Gille, Mikhail Zaidenberg and Doctor Yi Gu for kindly answering many questions, he would also like to thank Doctor Yifan Chen for improving the presentation.

\section{Birational automorphisms on quasi-minimal models} \label{sec1}
Given a projective variety $X$ over a field $k$, it is well known that its automorphism group $\aut(X)$ has a natural structure of a group scheme that is locally of finite type over $k$. An automorphism of $X$ over $k$ corresponds to a $k$-point of $\aut(X)$. However, the scheme structure on the birational automorphism group $\bir(X)$ does not behave well, and fails to form a group scheme in general. Inspired by the work of \cite{ps2}, we will focus on birational automorphism groups of a \emph{quasi-minimal model}, a weaker notion of minimal models introduced by Prokhorov and Shramov \cite{ps2}. To simplify our notations, \emph{we will work over a fixed algebraically closed field $k$ (of characteristic zero) in this section}. We review some facts from minimal model program, details can be found in \cite{ps2}.\\

We say that a normal projective variety $X$ has terminal singularities if the canonical divisor $K_X$ is $\Q$-Cartier, and for every resolution of singularities $\pi: Y \to X$, if we write
	$$
		K_Y = \pi^*K_X + \sum a_iE_i
	$$
	where $E_i$ are exceptional divisors, then $a_i > 0$ for all $i$.

A $\Q$-divisor $M$ on $X$ is said to be $\Q$-\emph{movable} if for some integer $d > 0$ the divisor $dM$ is integral and generates a linear system without fixed components. We say that $X$ is a \emph{quasi-minimal model} if $X$ has terminal singularities, and there exists a sequence of $\Q$-movable $\Q$-Cartier $\Q$-divisors $M_j$ whose limit in the Neron-Severi space $\ns_{\Q}(X) = \ns(X) \otimes \Q$ is $K_X$. It is easy to see that the canonical divisor of a quasi-minimal model is pseudo-effective, so that $X$ cannot be covered by rational curves. Conversely, Prokhorov and Shramov \cite[Lemma 4.4]{ps2} proved that every non-uniruled algebraic variety is birational to a quasi-minimal model. Furthermore, any birational map between two quasi-minimal models is an isomorphism in codimension $1$. It follows from Hanamura \cite{h1} that the birational automorphism group of a quasi-minimal model has a natural structure of a group scheme.

\begin{thm} \label{bir}
	Let $X$ be a quasi-minimal model. Then the birational automorphism group $\bir(X)$ of $X$ has a natural structure of a group scheme. Its neutral component $\bir_0(X)$ coincides with $\aut_0(X)$, the neutral component of the automorphism group scheme $\aut(X)$. 
\end{thm}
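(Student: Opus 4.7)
The plan is to invoke Hanamura's theorem \cite{h1} on birational automorphism groups of minimal models, after observing that its hypotheses survive in the quasi-minimal setting. Two ingredients are essential: (i) every birational self-map $f:X\dashrightarrow X$ of a quasi-minimal model is an isomorphism in codimension one, already recorded in the discussion above from \cite{ps2}; and (ii) $X$ has terminal (hence normal) singularities and $K_X$ is $\Q$-Cartier, both built into the definition. These are exactly what Hanamura uses.

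For the group scheme structure, the idea is to parametrize the closed graphs of birational self-maps. To each $f\in\bir(X)$ one associates its graph $\Gamma_f\subset X\times X$; since $f$ is an isomorphism in codimension one, both projections $\Gamma_f\to X$ are birational morphisms that are isomorphisms outside a codimension-two set, so $\Gamma_f$ has a fixed Hilbert polynomial and determines a point of $\hilb(X\times X)$. Following \cite{h1}, the subset of $\hilb(X\times X)$ cut out by these conditions is locally closed, and composition of birational self-maps, read off from the Stein factorizations of the two projections, is algebraic; this yields a group scheme $\bir(X)$ locally of finite type. Independence of the birational representative follows from the fact that any two quasi-minimal models in the same birational class are isomorphic in codimension one.

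For the equality of neutral components the inclusion $\aut_0(X)\subset\bir_0(X)$ is automatic, so the content is the reverse. I would establish it by showing that $\aut(X)\hookrightarrow\bir(X)$ is an open immersion of subgroup schemes; granting this, $\bir_0(X)$ is connected and contains the identity, hence sits inside $\aut(X)$ and therefore inside $\aut_0(X)$. Openness translates to the assertion that biregularity is an open condition in families of birational self-maps, which follows by applying fibre-dimension semicontinuity to the two projections of the universal graph $\Gamma\subset\bir(X)\times X\times X$: the locus where both projections are isomorphisms onto $\bir(X)\times X$ is the complement of a closed, proper subset, hence open.

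The main obstacle I expect is reconciling the three a priori different structures involved: the classical group scheme structure on $\aut(X)$, the Hilbert-scheme structure on $\bir(X)$, and the abstract group structure coming from composition. Checking compatibility and that the inclusion is an open immersion is the technical heart of \cite{h1}, and the only new observation required here is that Hanamura's argument needs nothing beyond the isomorphism-in-codimension-one property and the $\Q$-Cartier-ness of $K_X$, both of which hold for quasi-minimal models by definition; the proof accordingly transports to our setting verbatim.
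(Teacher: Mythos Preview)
Your proposal is correct and matches the paper's approach: the paper gives no self-contained proof of this theorem but simply observes that birational self-maps of a quasi-minimal model are isomorphisms in codimension one (from \cite{ps2}) and then invokes Hanamura \cite{h1}, which is exactly the route you outline in more detail. One minor quibble: the claim that $\Gamma_f$ has a \emph{fixed} Hilbert polynomial is not quite how Hanamura proceeds---one works inside the full Hilbert scheme (a disjoint union over Hilbert polynomials) and cuts out the locus of graphs of small birational maps as a locally closed subscheme---but this does not affect the validity of your sketch.
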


\begin{rem}
	(1) Given a projective variety $X$, one can always endow $\bir(X)$ a scheme structure by regarding it as a subscheme of the Hilbert scheme $\hilb(X \times X)$. However, as remarked by Hanamura \cite{h1}, the scheme structure on $\bir(X)$ does not behave well and fails to be a birational invariant in general. On the other hand, we can show that a birational map $\varphi: X \dashrightarrow Y$ between \emph{quasi-minimal models} induces an isomorphism of group schemes $\varphi_{\#}: \bir(X) \to \bir(Y)$. This is the main reason we choose to work on a quasi-minimal model.
	
	(2) Given a non-uniruled variety $X$, a quasi-minimal model of $X$ is obtained by running a minimal model program (MMP) on a nonsingular projective model of $X$. Hence the resulting quasi-minimal model is always $\Q$-factorial. We will take this for granted in the sequel.
\end{rem}

The connected algebraic group $\aut_0(X)$ is an \emph{abelian variety} of dimension $h^0(X, T_X)$ because $X$ is not uniruled. Notice that the number $h^0(X, T_X)$ is independent of choice of the quasi-minimal model $X$. This leads us to introduce the following notion.

\begin{defn} \label{vector}
Let $X$ be an algebraic variety (not necessarily projective), and let $f: X \dashrightarrow Z$ be the maximal rationally connected (MRC) fibration of $X$. It is known that $Z$ is not uniruled \cite{ghs}, and the generic fibre $X_\eta$ is rationally connected. Let $Y$ be a quasi-minimal model of $Z$. We define
	$$
		\delta(X) := \dim(X_{\eta}) ~~~\mbox{ and }~~~ \nu(X) := h^0(Y, T_Y),
	$$
	where $T_Y = \x{H}om(\Omega_Y, \oo_Y)$ is the tangent sheaf of $Y$.
\end{defn}

The number $\nu(X)$ is the dimension of automorphism group scheme of the quasi-minimal model $Y$. Notice that we always have $\nu(X) \leq \dim Y$. Equality holds if and only if $Y$ is an abelian variety.

%{\color{red}\begin{prop} \label{ample}
	%Let $L$ be an ample divisor on a quasi-minimal model $X$. Then the subgroup scheme
	%$$
	%	\bir(X, L) = \{g \in \bir(X) | g^*L \sim_{a} L\}
	%$$
	%is a group scheme of finite type over $k$. 
%\end{prop}
%\begin{proof}
%	$G = \bir(X, L)$ acts on the algebraic class $[L]$ of $L$, so we have a homomorphism of group schemes $\phi: G \to \aut([L])$. The latter is a group scheme of finite type since $[L]$ is torsor under the abelian variety $\mathrm{Cl}_0(X)$ (this is in turn implied by the fact that $X$ has rational singularities). Let $H$ be the kernel of $\phi$. Then $H$ acts trivially on $[L]$, in particular, it preserves the linear class of $L$. It is known that \cite{ps1} $H$ acts biregularly, and is a finite group. We deduce that $G$ is a group scheme of finite type.
%\end{proof}}

\begin{lem} \label{codim}
	Let $X$ be a $\Q$-factorial normal projective variety, $L$ be an ample $\Q$-divisor on $X$, and let $f: X \dashrightarrow X$ be a birational map which is an isomorphism in codimension $1$. Assume that $f_*L \equiv L $. Then $f$ is an isomorphism.
\end{lem}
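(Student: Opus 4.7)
The plan is to show that $f$ extends to a morphism on all of $X$, and then deduce that $f$ is biregular by an ampleness argument. First, I would take a common resolution of indeterminacy, namely a smooth projective $W$ together with birational morphisms $p, q: W \to X$ satisfying $f = q \circ p^{-1}$ as rational maps. Because $f$ is an isomorphism in codimension one, the prime $p$-exceptional divisors on $W$ coincide with the prime $q$-exceptional divisors; call them $E_1, \dots, E_r$.

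The key step will be to upgrade the numerical hypothesis $f_*L \equiv L$ to the equivalence $p^*L \equiv q^*L$ on $W$. Setting $D := p^*L - q^*L$, one has $p_*D \equiv L - f^{-1}_*L \equiv 0$, because $f_*L \equiv L$ forces $f^{-1}_*L \equiv L$ as well. So the numerical class $[D]$ lies in $\ker(p_*: \ns_\Q(W) \to \ns_\Q(X))$, which is the $\Q$-span of $[E_1], \dots, [E_r]$, and I can write $[D] = [D_0]$ for an exceptional representative $D_0 = \sum a_i E_i$. For a curve $C$ contracted by $p$, ampleness of $L$ gives $D_0 \cdot C = -L \cdot q_*C \leq 0$, so by the negativity lemma $D_0 \geq 0$. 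Running the same argument with $p$ and $q$ interchanged — which is legitimate precisely because the exceptional divisors coincide — yields $D_0 \leq 0$, whence $D_0 = 0$ and $p^*L \equiv q^*L$.

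With this equivalence in hand, any curve $C$ contracted by $p$ satisfies $L \cdot q_*C = q^*L \cdot C = p^*L \cdot C = 0$, so $q_*C = 0$ by ampleness of $L$, and $q$ contracts $C$ as well. Thus $q$ is constant on every connected fiber of $p$, and the rigidity lemma produces a morphism $g: X \to X$ with $q = g \circ p$; comparing on the open iso locus yields $g = f$, so $f$ is a morphism. Since $f$ is still iso in codimension one, it contracts no divisor, whence $f^*L \equiv L$. If $f$ contracted some curve $C$, then $L \cdot C = f^*L \cdot C = L \cdot f_*C = 0$ would contradict ampleness; so $f$ is finite, and being birational onto the normal variety $X$, it is an isomorphism by Zariski's main theorem.

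The main obstacle I anticipate is the double invocation of the negativity lemma that produces the equivalence $p^*L \equiv q^*L$ from the merely numerical datum $f_*L \equiv L$. Both the coincidence of exceptional loci (supplied by iso in codim one) and the strict positivity coming from ampleness of $L$ are essential here: without the former the second appeal to negativity is unavailable, and without the latter one cannot conclude $q_*C = 0$ from $L \cdot q_*C = 0$.
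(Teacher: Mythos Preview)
Your argument is correct and shares its core with the paper's proof: both pass to a common resolution, form the difference of the two pullbacks of $L$, and apply the negativity lemma twice---using that the $p$- and $q$-exceptional loci coincide because $f$ is an isomorphism in codimension one---to conclude $p^*L \equiv q^*L$. The only real difference is the endgame. The paper works on the graph $\Gamma \subset X \times X$, where $p^*L + q^*L$ is automatically ample as the restriction of an ample class from the product; once $p^*L + q^*L \equiv 2p^*L \equiv 2q^*L$, both pullbacks are ample, so $p$ and $q$ are finite birational and hence isomorphisms. You instead deduce from $p^*L \equiv q^*L$ that $q$ contracts every fibre of $p$, invoke the rigidity lemma to realize $f$ as a morphism, and finish with Zariski's Main Theorem. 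The paper's conclusion is a line shorter and exploits the ambient product structure; your route is slightly more hands-on but works on an arbitrary resolution $W$ without reference to $X \times X$, and makes the role of ampleness in forcing $q_*C = 0$ more explicit.
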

\begin{proof}
	Note that as $X$ is $\Q$-factorial, $f_*L$ is a $\Q$-Cartier $\Q$-divisor, so that it makes sense to say $f_*L \equiv L$. By assumption, there exists big open subsets $U$ and $V$ of $X$, such that $f: U \to V$ is an isomorphism. Let $\Gamma_f \subset U \times V$ be the graph of $f$, and $\Gamma$ be the Zariski closure of $\Gamma_f$ in $X \times X$. Let $p$ and $q$ be the first and second projections of $\Gamma$ onto $X$,
	$$
	\xymatrix{
	& \Gamma \ar[rd]^q\ar[ld]_p & \\
	X \ar@{-->}[rr]^f && X  
	}
	$$
	they are all projective birational morphisms. Notice that as $f$ is an isomorphism in codimension $1$, push-forward of divisors satisfies $q_* = f_*p_*$ and $p_* = f^{-1}_*q_*$.
	
	Consider the $\Q$-Cartier $\Q$-divisor $D = p^*L - q^*f_*L$ on $\Gamma$. $D$ is $q$-nef, and $-D$ is $p$-nef. The push-forward of $D$ by $p$ (resp. by $q$) equals $0$ on $X$, by negativity lemma, both $D$ and $-D$ are effective, so $D = 0$. This implies that $p^*L = q^*f_*L \equiv q^*L$. Since the divisor
	$$
	p^*L + q^*L \equiv 2p^*L \equiv 2q^*L
	$$
	is always ample on $\Gamma$, it follows that $p$ and $q$ must be isomorphisms, a fortiori, $f$ is an isomorphism. 
\end{proof}

\begin{lem} \label{comp} \cite[Lemma 2.5]{mz}
	Let $X$ be a projective variety. Then the component group $\aut(X)/\aut_0(X)$ has bounded finite subgroups.
\end{lem}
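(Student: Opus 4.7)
The plan is to reduce the statement to Minkowski's bound on finite subgroups of $\mathrm{GL}_n(\Z)$ (the constant $M(n)$ in the list) by exploiting the action of $\aut(X)$ on the N\'eron-Severi lattice $\ns(X)$. Since $\aut_0(X)$ is connected, it acts trivially on the discrete group $\ns(X)$, so the action of $\aut(X)$ on $\ns(X)$ factors through a homomorphism
$$
\phi : \aut(X)/\aut_0(X) \longrightarrow \mathrm{GL}(\ns(X)) \cong \mathrm{GL}_{\rho(X)}(\Z),
$$
where $\rho(X)$ is the Picard number of $X$.

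The crucial step I would carry out first is to show that $\ker(\phi)$ is a finite group. Fix an ample divisor $H$ on $X$; then $\ker(\phi)$ is contained in
$$
\aut(X,[H]) := \{g \in \aut(X) : g^*H \equiv H\}\,/\,\aut_0(X).
$$
Representing each $g \in \aut(X,[H])$ by its graph $\Gamma_g \subset X \times X$, and polarizing $X \times X$ by $p_1^*H + p_2^*H$, the Hilbert polynomial of $\Gamma_g$ is determined by the numerical class of $g^*H$, so all such graphs lie in a bounded family inside $\hilb(X\times X)$. Consequently $\aut(X,[H])$ is an algebraic group of finite type whose neutral component coincides with $\aut_0(X)$; thus $\aut(X,[H])/\aut_0(X)$ is finite of some order $c(X)$, and $|\ker(\phi)| \leq c(X)$.

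Once this is in hand, the conclusion is formal. For any finite subgroup $G \subseteq \aut(X)/\aut_0(X)$, the exact sequence
$$
1 \longrightarrow G \cap \ker(\phi) \longrightarrow G \longrightarrow \phi(G) \longrightarrow 1
$$
gives $|G| \leq |\ker(\phi)| \cdot |\phi(G)| \leq c(X)\cdot M(\rho(X))$, because $\phi(G)$ is a finite subgroup of $\mathrm{GL}_{\rho(X)}(\Z)$. Setting $b(X) := c(X)\cdot M(\rho(X))$ then yields the desired uniform bound.

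The main obstacle is the first step: proving that the automorphisms of $X$ fixing $[H]$ numerically form an algebraic group of finite type. The subtlety is that $g^*H$ is only required to be numerically equivalent to $H$, not linearly equivalent, so one has to check that the Hilbert polynomial of $\Gamma_g$ with respect to $p_1^*H + p_2^*H$ really does depend only on intersection numbers of $g^*H$ against $H$, and hence is constant on $\aut(X,[H])$. This is classical Matsusaka--Mumford boundedness in the polarized setting, but it is the only substantive input; the rest of the argument reduces everything to the arithmetic bound of Minkowski.
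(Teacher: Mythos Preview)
The paper does not prove this lemma itself; it is simply quoted from \cite[Lemma 2.5]{mz}. Your argument is correct and is essentially the proof given in that reference: let $\aut(X)/\aut_0(X)$ act on $\ns(X)$, bound the image by Minkowski's theorem, and bound the kernel by observing that the automorphisms fixing the numerical class of an ample divisor form a group scheme of finite type whose neutral component is $\aut_0(X)$. This is also precisely the template the paper itself uses one step later in the proof of Lemma~\ref{component}, where the same two-step bound (image in $\gl$ of a finitely generated abelian group, kernel controlled by the present lemma via Lemma~\ref{codim}) is applied to $\bir(X)$ acting on $\wcl(X)/\wcl_0(X)$.

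One small correction: for an arbitrary projective variety $\ns(X)$ is finitely generated but need not be torsion-free, so the identification $\gl(\ns(X)) \cong \gl_{\rho(X)}(\Z)$ is not literally correct. This does not affect the argument; simply replace the appeal to $M(\rho(X))$ by the statement that $\aut(N)$ has bounded finite subgroups for any finitely generated abelian group $N$, which the paper records as Corollary~\ref{finitez}.
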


\begin{lem} \label{component}
	Let $X$ be a ($\Q$-factorial) quasi-minimal model. Then the component group $\bir(X)/\bir_0(X)$ has bounded finite subgroups, i.e., there exists a constant $b(X)$ such that $|G_1| \leq b(X)$ for every finite subgroup $G_1 \subset \bir(X)/\bir_0(X)$.
\end{lem}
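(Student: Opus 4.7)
\medskip

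\noindent\textbf{Proof proposal.} The plan is to study the action of $\bir(X)$ on the Néron–Severi space $\ns_\Q(X)$ by pushforward and split the problem into a kernel part (handled by Lemma \ref{codim} and Lemma \ref{comp}) and an image part (handled by a Minkowski-type bound for finite subgroups of $\gl_r(\Z)$).

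First, I would recall that since $X$ is a quasi-minimal model, every birational self-map $f : X \dashrightarrow X$ is an isomorphism in codimension $1$, as noted in the discussion preceding Theorem \ref{bir}. Consequently the pushforward of $\Q$-Cartier $\Q$-divisors is well defined and compatible with composition modulo numerical equivalence, so we obtain a natural homomorphism
$$
\rho : \bir(X) \longrightarrow \gl\bigl(\ns_\Q(X)\bigr).
$$
Since $\bir_0(X)$ is connected, it lies in $\ker \rho$, and $\ker\rho$ is in particular contained in $\aut(X)$: indeed, if $\rho(f)$ is trivial and $L$ is any ample $\Q$-divisor on $X$, then $f_*L \equiv L$, so Lemma \ref{codim} forces $f$ to be an honest automorphism of $X$. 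Hence we obtain a factorization
$$
\ker\rho / \bir_0(X) \hookrightarrow \aut(X)/\aut_0(X),
$$
and by Lemma \ref{comp} the right-hand side has bounded finite subgroups, with a bound depending only on $X$.

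Next I would handle the image. Choosing an integral basis of $\ns(X)$ (which is a finitely generated abelian group) realizes $\rho(\bir(X))$ as a subgroup of $\gl_r(\Z)$ with $r = \rk \ns(X)$. By Minkowski's classical theorem (this is the constant $M(n)$ listed in the introduction), the order of any finite subgroup of $\gl_r(\Z)$ is bounded by a constant $M(r)$ depending only on $r$. Putting the two pieces together, for any finite subgroup $G_1 \subset \bir(X)/\bir_0(X)$ we have an exact sequence
$$
1 \to G_1 \cap \bigl(\ker\rho/\bir_0(X)\bigr) \to G_1 \to \rho(G_1) \to 1,
$$
and the outer terms have bounded order, so $|G_1|$ is bounded by a constant $b(X)$ depending only on the birational class of $X$.

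The main subtlety I would have to be careful about is verifying that $\rho$ is genuinely a group homomorphism: pushforward of Weil divisors by birational maps in general fails to be multiplicative, and one must use that quasi-minimal birational self-maps are small (isomorphisms in codimension $1$) to guarantee compatibility with composition at the level of $\ns_\Q(X)$. Once that is in place, the argument above is a clean reduction to Lemma \ref{codim}, Lemma \ref{comp}, and Minkowski's theorem, and produces the desired constant $b(X)$.
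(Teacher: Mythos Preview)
Your argument is correct and follows essentially the same strategy as the paper: act on a finitely generated divisor class group, bound the image by Minkowski (the paper cites Corollary~\ref{finitez}, you invoke $M(r)$ directly), and use Lemma~\ref{codim} plus Lemma~\ref{comp} to bound the kernel modulo $\bir_0(X)=\aut_0(X)$.

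The one noteworthy difference is that the paper first invokes a theorem of Brion to lift the finite subgroup $G_1\subset\bir(X)/\bir_0(X)$ to a finite subgroup $G\subset\bir(X)$, and then runs the kernel/image argument for $G$. You instead define the representation $\rho$ on all of $\bir(X)$, observe that the connected group $\bir_0(X)$ lies in $\ker\rho$, and work directly with the induced map on the quotient; this sidesteps the lifting step entirely. Your route is slightly more economical, at the cost of having to check (as you correctly flag) that pushforward by small birational self-maps gives a genuine homomorphism to $\gl(\ns_\Q(X))$ and that it preserves the integral lattice so that Minkowski applies.
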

\begin{proof}
	Let $G_1$ be a finite subgroup of $\bir(X)/\bir_0(X)$. By a Theorem of Brion \cite{br3}, we may find a finite subgroup $G$ of $\bir(X)$ maps onto $G_1$, so that $G_1 \simeq G/(G \cap \bir_0(X))$. Thus it is enough to bound the order of $G/(G \cap \bir_0(X))$.
	
	By Theorem \ref{bir}, we have $\bir_0(X) = \aut_0(X)$. Fix an ample divisor $L$ on $X$.  The group $G$ acts on the Weil divisor class group $\wcl(X)$, and hence on the finite generated abelian group $V := \wcl(X)/\wcl_0(X)$ (cf. \cite[\S5]{ps2}). Let $G'$ (resp. $G"$) be the kernel (resp. the image) of the homomorphism $G \to \gl(V)$, then $G'$ acts trivially on $V$. In particular, it preserves the algebraic class, hence the numerical class of $L$. By Lemma \ref{codim}, $G'$ is contained in the automorphism group $\aut(X)$. It follows from Lemma \ref{comp} that there exists a constant $\ell(X)$ such that
	$$
		|G'/(G' \cap \aut_0(X))| \leq \ell(X).
	$$
	On the other hand, $|G"|$ is bounded by a constant say, $\gamma(X)$ (cf. Corollary \ref{finitez}). Thus
	$$
		|G/(G \cap \bir_0(X))| \leq |G"||G'/G' \cap \bir_0(X)| \leq \gamma(X)\ell(X).
	$$
	Therefore $b(X) = \gamma(X)\ell(X)$ is a desired upper bound.
\end{proof}

For varieties having actions of abelian varieties, we have a satisfactory structure theorem due to Nishi, Matsumura, Rosenlicht and Brion.

%\begin{exam}
	%Let $f: X = \PP(\x{E}) \to C = \PP^1$ be a ruled surface, where $E = \oo_{C} \oplus \oo_{C}(-e)$ for $e \geq 0$. It is easy to show that $h^0(X, T_{X/C})$ equals $3$ if $e = 0$, equals $e + 2$ if $e > 0$.
%\end{exam}

\begin{thm} \label{nmbr}
	Let $A$ be an abelian variety, acting faithfully on a normal projective variety $X$. Then $X$ is isomorphic to $A \times^H F$, where $H \subset A$ is a finite subgroup of translations and $F \subset X$ is some normal projective variety stable under $H$.
\end{thm}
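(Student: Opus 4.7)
The plan is to first produce an $A$-equivariant surjective morphism $\pi: X \to A/H$ with $H \subset A$ a finite subgroup, and then identify $X$ with the associated fiber bundle $A \times^H F$, where $F = \pi^{-1}(\bar 0)$ is the fiber over the identity coset.

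For the construction of $\pi$, I would use the Albanese morphism $\alpha: X \to \alb(X)$ of the normal projective variety $X$. By the universal property, the $A$-action on $X$ induces an $A$-action on $\alb(X)$ for which $\alpha$ is equivariant; since $A$ is connected and acts on an abelian variety, this induced action is by translation and comes from a homomorphism of abelian varieties $\phi: A \to \alb(X)$. The crucial input, which is the heart of the Nishi--Matsumura theorem, is that faithfulness of the $A$-action on $X$ forces $\ker(\phi)$ to be finite: no positive-dimensional abelian subvariety $B \subset A$ can lie in $\ker(\phi)$, because such a $B$ would act ``vertically'' over $\alb(X)$ and, by rigidity of morphisms from abelian varieties combined with an orbit analysis on a fixed fiber of $\alpha$, would be forced to act trivially on $X$. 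Setting $H := \ker(\phi)$, after an appropriate translation of $\alpha$ one obtains an $A$-equivariant morphism $\pi: X \to A/H$ identifying $A/H$ with $\phi(A) \subset \alb(X)$.

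For the descent step, let $F := \pi^{-1}(\bar 0)$. Since $H$ is the stabilizer of $\bar 0$ under the $A$-action on $A/H$, the fiber $F$ is stable under $H$, and it is a closed normal projective subvariety of $X$. The action map $\mu: A \times F \to X$, $(a, f) \mapsto a \cdot f$, is surjective because $A$ acts transitively on $A/H$ and $\pi$ is equivariant. It is constant on the orbits of the diagonal $H$-action $h \cdot (a, f) = (a h^{-1}, h \cdot f)$ and hence descends to an $A$-equivariant morphism $\bar\mu: A \times^H F \to X$ over $A/H$. Both $A \times^H F \to A/H$ and $\pi: X \to A/H$ are fiber bundles with typical fiber $F$ (the former via the \'etale cover $A \to A/H$), and $\bar\mu$ restricts to the identity on the fiber $F$ over $\bar 0$; by $A$-equivariance, $\bar\mu$ is an isomorphism over every point, hence an isomorphism.

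The main obstacle is the finiteness of $\ker(\phi)$ in the first step. This is precisely the content of the classical Nishi--Matsumura theorem (in its modern formulation due to Brion), and it is the only place where normality of $X$ and faithfulness of the action are essentially used. Once that is in hand, the rest of the argument is formal descent along the isogeny $A \to A/H$.
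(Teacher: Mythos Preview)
The paper does not actually prove this theorem: it is stated as a known structure result attributed to Nishi, Matsumura, Rosenlicht and Brion, with a pointer to \cite{x} for details and the remark that $F$ arises as a fibre of the homogeneous fibration induced by the $A$-action. Your outline is precisely that standard argument --- Albanese map, Nishi--Matsumura finiteness of the kernel, then \'etale descent along $A \to A/H$ --- so there is nothing to compare.

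One small wrinkle worth tightening: the sentence ``after an appropriate translation of $\alpha$ one obtains an $A$-equivariant morphism $\pi: X \to A/H$ identifying $A/H$ with $\phi(A) \subset \alb(X)$'' is not quite right as written. In general $\alb(X)$ is strictly larger than $\phi(A)$, and $\alpha(X)$ generates all of $\alb(X)$, so no translation of $\alpha$ lands in $\phi(A)$. What one actually does (as in Brion's treatment) is use Poincar\'e complete reducibility to pick a complementary abelian subvariety $B \subset \alb(X)$ with $\phi(A) + B = \alb(X)$ and $\phi(A) \cap B$ finite, then compose $\alpha$ with the quotient $\alb(X) \to \alb(X)/B$; the target is isogenous to $A/H$, and after replacing $H$ by a possibly larger finite subgroup one gets the desired equivariant surjection $\pi: X \to A/H$. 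This is a routine adjustment and does not affect the rest of your argument, which is correct.
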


The symbol $A \times^H F$ means the quotient of $A \times F$ by the diagonal action of $H$. Nontrivial examples of such varieties are given by biellipitic surfaces, which are minimal surfaces of Kodaira dimension $0$, with irregularity $q = 1$ and geometric genus $p_g = 0$. One can show that the variety $F \subset X$ in the theorem is a fibre of the homogeneous fibration on $X$ induced from the action of $A$. We refer to \cite{x} for details.

\begin{defn}
	Let $X$ be a non-uniruled variety. In the birational class of $X$, we choose a fixed quasi-minimal model $Y$, and a fixed ample divisor $L$ on $Y$. The pair $(Y, L)$ is referred as the \emph{polarized quasi-minimal model} of $X$. By our choice, it depends only on the birational class of $X$. 
\end{defn}

Our first application is to characterize non-uniruled varieties whose birational automorphism groups do not have bounded finite subgroups (see Definition \ref{boundjordan}).

\begin{thm} \label{nruled1}
	Let $X$ be a non-uniruled variety, and $b(X)$ be the constant introduced in Lemma \ref{component}. Then $\bir(X)$ contains a finite subgroup $G$ of order greater than $b(X)$ if and only if $\nu(X) > 0$. Moreover, such a variety $X$ is birational to a normal projective variety $Y$ having an action of an abelian variety $A$ of dimension $\nu(X)$, and $Y \simeq A \times^H F$ for some non-uniruled projective variety $F$.
	\end{thm}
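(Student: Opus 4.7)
The plan is to replace $X$ by its polarized quasi-minimal model $(Y, L)$ throughout, and exploit the identifications $\bir(X) = \bir(Y)$ and $\bir_0(Y) = \aut_0(Y)$ granted by Theorem \ref{bir}. Since $X$ is non-uniruled, the MRC fibration is trivial, so $\delta(X) = 0$ and $\nu(X) = h^0(Y, T_Y) = \dim \aut_0(Y)$. Moreover, since $X$ is non-uniruled, $\aut_0(Y)$ is an abelian variety by the discussion following Remark preceding Definition \ref{vector}.

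For the equivalence, I would handle the two directions separately. If $\nu(X) = 0$, then $\aut_0(Y) = \bir_0(Y)$ is trivial, hence $\bir(X) = \bir(Y) = \bir(Y)/\bir_0(Y)$, and Lemma \ref{component} forces every finite subgroup to have order at most $b(X)$; so no subgroup of order exceeding $b(X)$ can exist. Conversely, if $\nu(X) > 0$, then $A := \aut_0(Y)$ is a nontrivial abelian variety, and its $m$-torsion subgroup $A[m]$ has order $m^{2\nu(X)}$, which embeds into $\bir(X)$ and exceeds $b(X)$ for $m$ large enough.

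For the structural statement, I would apply Theorem \ref{nmbr} to the faithful action of $A = \aut_0(Y)$ on the normal projective variety $Y$: this directly yields $Y \simeq A \times^H F$ for a finite subgroup $H \subset A$ of translations and an $H$-stable normal projective subvariety $F \subset Y$, with $\dim A = \nu(X)$. It remains to argue that $F$ is non-uniruled. Since the quotient map $A \times F \to A \times^H F = Y$ is finite and surjective, uniruledness of $F$ would force uniruledness of $A \times F$ and therefore of $Y$, contradicting the fact that $Y$ is a quasi-minimal model (whose canonical class is pseudo-effective, cf.~the discussion before Theorem \ref{bir}).

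The main subtlety I anticipate is not in any single step but in being precise that the constant $b(X)$ depends only on the birational class, so that the passage $X \rightsquigarrow Y$ is harmless; this is guaranteed by the construction of $b(X)$ in Lemma \ref{component} (applied to the quasi-minimal model). The mild technical point is confirming non-uniruledness of $F$: one has to invoke that $A \times^H F$ admits a homogeneous fibration over the abelian variety $A/H$ with fiber $F$ (as recalled after Theorem \ref{nmbr}), so rational curves on $F$ would propagate to rational curves sweeping out $Y$, contradicting $K_Y$ being pseudo-effective.
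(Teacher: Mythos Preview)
Your proposal is correct and follows essentially the same route as the paper: pass to the quasi-minimal model, use Lemma \ref{component} together with the identification $\bir_0(Y)=\aut_0(Y)$ for the equivalence, and invoke Theorem \ref{nmbr} for the structural part. You even supply the verification that $F$ is non-uniruled (via the finite cover $A\times F\to Y$ or the homogeneous fibration), a point the paper's proof leaves implicit.
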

\begin{proof}
	The question is birational in nature, so we may replace $X$ by its polarized quasi-minimal model. Lemma \ref{component} shows that if $|G| > b(X)$ for some finite subgroup $G \subset \bir(X)$, then $G$ has nontrivial intersection with the neutral component $\aut_0(X)$. Hence $\nu(X) > 0$.
	
	Conversely, if $\nu(X) > 0$, $\aut_0(X)$ is an abelian variety of dimension $\nu(X)$, which contains elements of order $d$ for every $d > 0$. This shows that $\bir(X)$ does not have bounded finite subgroups. The rest of statements follows from Theorem \ref{nmbr} with $A = \aut_0(X)$.
\end{proof}

A non-uniruled variety having vanishing irregularity $q = h^1(X, \oo_X)$ satisfies $\nu(X) = 0$. Thus Theorem \ref{nruled1} strengthens Theorem 1.8 (i) in \cite{ps2}.\\

Next, we present a characterization algebraic threefolds whose birational automorphism groups are not Jordan. Recall that the \emph{rank} of a finite group $G$ is
	$$
		r(G) := \min\{|S|: S \subset G, S \mbox{ generates } G\},
	$$
	where $|S|$ means the cardinality of $S$. For example, an \emph{elementary abelian group} $G = (\Z/p\Z)^r$ has rank $r(G) = r$.

\begin{defn} \label{boundjordan}
	Let $G$ be a group. We say that
	
	(1) $G$ has \emph{bounded finite subgroups} if there exists a constant $B$ such that $|\Gamma| \leq B$ for every finite subgroup $\Gamma \subset G$;
	
	(2) $G$ is \emph{Jordan} if there exists a constant $J$ such that for every finite subgroup $\Gamma \subset G$, we may find a normal abelian subgroup $A \subset \Gamma$ of index $[\Gamma:A] \leq J$;
	
	(3) $G$ has \emph{finite subgroups of bounded rank} (cf. \cite[Definition 2.5]{ps2}) if there exists a constant $R$ such that $r(\Gamma) \leq R$ for every finite subgroup $\Gamma$ of $G$.
\end{defn}

Given an exact sequence of groups $1 \to G' \to G \to G" \to 1$, one can show that $G$ is Jordan if either $G'$ has bounded finite subgroups and $G"$ is Jordan and has finite subgroups of bounded rank, or $G'$ is Jordan and $G"$ has bounded finite subgroups. Therefore, given that $G'$ and $G"$ are Jordan, and $G"$ has finite subgroups of bounded rank, then $G$ is not Jordan implies that both $G'$ and $G"$ do not have bounded finite subgroups. The main results of \cite{ps1} and \cite{ps2} tell us that $\bir(X)$ is Jordan if $X$ is either rationally connected or non-uniruled. Moreover, we also know that $\bir(X)$ has finite subgroups of bounded rank if $X$ is non-uniruled.

\begin{thm} \label{threefolds1}
	Let $X$ be a threefold. The group $\bir(X)$ is not Jordan if and only if $X$ is birational to $\BP^1 \times (E \times^H F)$ where $E$ is an elliptic curve, or $\BP^1 \times A$ where $A$ is an abelian surface.
\end{thm}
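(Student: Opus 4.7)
The plan is to combine the MRC fibration of $X$ with the Jordan property of $\bir$ in the rationally connected and non-uniruled cases (\cite{ps1,ps2}), the structure theorem for abelian-variety actions (Theorems \ref{nmbr} and \ref{nruled1}), and the rationality criteria of Theorems \ref{bz} and \ref{im}.

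For the $(\Leftarrow)$ direction, the non-Jordan-ness of both families is established by the classical theta-group (Heisenberg) construction: ample line bundles of growing polarization type on (an isogeny twist of) $A$, respectively on $E$ combined with the $E$-action on $E\times^H F$, produce finite central extensions of elementary abelian groups acting faithfully on a birational model of $\BP^1\times A$, respectively of $\BP^1\times(E\times^H F)$, whose minimal abelian normal subgroup has arbitrarily large index.

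For $(\Rightarrow)$, assume $\bir(X)$ is not Jordan. By \cite{ps1,ps2}, $X$ is uniruled but not rationally connected, so its MRC fibration $f\colon X\dashrightarrow Z$ has $1\le\dim Z\le 2$. The central tool is the exact sequence
$$
1 \to \bir_{k(Z)}(X_\eta) \to \bir(X) \to G''\to 1,\qquad G''\subset\bir(Z).
$$
Since $Z$ is non-uniruled, $\bir(Z)$ is Jordan with finite subgroups of bounded rank (\cite{ps2}), and $\bir_{k(Z)}(X_\eta)$ is Jordan by \cite{ps1} (as $X_\eta$ is rationally connected over $k(Z)$); so non-Jordan-ness of $\bir(X)$ forces both the kernel and $G''$ to have unbounded finite subgroups. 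When $\dim Z=2$, the generic fiber $X_\eta$ is a smooth conic over $k(Z)$, and Theorem \ref{bz} bounds $|\bir_{k(Z)}(X_\eta)|\le 4$ unless $X_\eta$ is $k(Z)$-rational; hence $X$ must be birational to $\BP^1\times Z$. The unboundedness of $G''\subset\bir(Z)$ combined with Lemma \ref{component} forces $\nu(Z)>0$, and Theorem \ref{nruled1} writes a quasi-minimal model of $Z$ as $A_0\times^H F$ with $\dim A_0=\nu(Z)\in\{1,2\}$, giving precisely the two listed birational forms.

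When $\dim Z=1$, the base $Z$ must have genus $\ge 1$; if genus is $\ge 2$ then $\aut(Z)$ is finite, forcing $\bir(X)$ Jordan, so $Z=E$ is an elliptic curve. The generic fiber $X_\eta$ is a rationally connected surface over $k(E)$, admitting a $k(E)$-rational point by Graber--Harris--Starr; Theorem \ref{im} then forces $k(E)$-rationality when $K_{X_\eta}^2\ge 5$, so $X$ is birational to $\BP^2\times E\sim\BP^1\times(E\times\BP^1)$, which fits the form $\BP^1\times(E\times^H F)$ with $F=\BP^1$ and $H$ trivial. For $K_{X_\eta}^2\le 4$, a finer analysis of birational self-maps of low-degree del Pezzo surfaces over $k(E)$ in the spirit of Iskovskikh--Manin is needed to show that $\bir_{k(E)}(X_\eta)$ still has bounded finite subgroups, contradicting non-Jordan-ness. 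Upgrading the bound in Theorem \ref{im} on $\aut$ to a corresponding bound on $\bir$ in this low-degree regime is the main technical obstacle in the argument.
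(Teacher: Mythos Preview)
Your overall architecture matches the paper's proof: MRC fibration, the exact sequence on $\bir$, the Jordan/bounded-rank permanence argument forcing both kernel and image to have unbounded finite subgroups, and then Theorem \ref{nruled1} to identify the quasi-minimal base $Z$ as $A\times^H F$. The $\dim Z = 2$ case is handled exactly as in the paper, via Theorem \ref{bz}.

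The divergence --- and the gap you yourself flag --- is in the $\dim Z = 1$ case. The paper does not try to argue through Theorem \ref{im}. Instead it cites \cite[Theorem 1.6]{ps4} as a black box: that result says that a geometrically rational surface $S$ over a field $k$ containing all roots of unity, with $S(k)\neq\emptyset$, is $k$-rational as soon as $\bir_k(S)$ fails to have bounded finite subgroups. This is exactly the statement you are missing, and it is not a routine upgrade of Theorem \ref{im}: its proof in \cite{ps4} passes through the Iskovskikh--Manin classification of Sarkisov links between minimal geometrically rational surfaces, and must treat both del Pezzo and conic-bundle minimal models. Your sketch implicitly assumes the $G'$-MMP terminates in a del Pezzo surface, but it may terminate in a conic bundle over a curve; and even in the del Pezzo case, the bound on $\aut$ of one terminal model does not by itself bound finite subgroups of $\bir$, since different finite $G'$ regularize on different models. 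So the ``main technical obstacle'' you identify is genuine, and the paper resolves it by importing the surface result from \cite{ps4} rather than reproving it here.
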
 
\begin{proof}
	The birational automorphism groups of all threefolds listed above are not Jordan. Conversely, assume that $G := \bir(X)$ is not Jordan. Let $f: X \dashrightarrow Z$ be the MRC fibration of $X$. We may assume $Z$ is the polarized quasi-minimal model, $X$ is nonsingular projective, and $f$ is a projective morphism. By functoriality of MRC fibration, $G$ decomposes as
	$$
	1 \to G' \to G \to G" \to 1,
	$$
	where $G'$ (resp. $G"$) acts faithfully on the generic fibre $X_\eta$ (resp. on the base $Z$) as birational automorphisms. As remarked before, we know that $G' \subset \bir(X_\eta)$ and $G" \subset \bir(Z)$ are Jordan, and $G"$ has finite subgroups of bounded rank. Hence both $G'$ and $G"$ do not have bounded finite subgroups. This implies $\dim X_\eta > 0$ and $\dim Z > 0$.
	
	By Theorem \ref{nruled1}, $\aut_0(Z)$ is a nontrivial abelian variety, and $Z$ is isomorphic to $A \times^H F$. If $\dim Z = 1$, it must be an elliptic curve. A well-known theorem of Graber, Harris, and Starr \cite{ghs} says that the generic fibre $X_\eta$ has a $k(Z)$-rational point. By Theorem \cite[Theorem 1.6]{ps4}, $X_\eta$ is $k(Z)$-rational. Hence $X$ is birational to $\BP^2 \times Z$. Up to birational equivalence, we may also write $X$ as $\BP^1 \times Z \times \BP^1$, where $Z$ is an elliptic curve. If $\dim Z = 2$, then $Z$ is isomorphic to either an abelian surface $A$ or $E \times^H F$ for some nonsingular projective curve $F$. By Theorem \ref{bz}, the generic fibre $X_\eta$ is $k(Z)$-rational. Hence $X$ is birational to $\BP^1 \times Z$.
	
	Summarizing, we see that $X$ is either birational to $\BP^1 \times A$ for some abelian surface $A$, or $\BP^1 \times (E \times^H F)$ for some elliptic curve $E$ and a nonsingular curve $F$ (any genus is possible). As desired.
\end{proof}

Finally, we can bound the rank of finite groups of automorphisms for an arbitrary variety. It is easy to see that on a non-uniruled variety, the rank of a finite $p$-subgroup of $\bir(X)$ can be greater than $n$ even when $p$ is very large. For example, $(\Z/p\Z)^{2n}$ can be realized as an additive subgroup of an abelian variety of dimension $n$. We show that in fact this is almost the best upper bound.

\begin{prop} \label{prank1}
Let $X$ be an algebraic variety of dimension $n$.

(1) Assume that $X$ is not uniruled. Then there exists a constant $L(X)$ (depending on the birational class of $X$) such that for any prime $p > L(X)$, if $G \subset \bir(X)$ is a finite $p$-subgroup, then $G$ is abelian and $r(G) \leq 2\nu(X)$.

(2) In general, there exists a constant $L(X)$ (depending on the birational class of $X$) such that for any prime $p > L(X)$, if $G \subset \bir(X)$ is finite $p$-subgroup, then $G$ is an extension of two abelian $p$-groups and $r(G) \leq \delta(X) + 2\nu(X)$.
\end{prop}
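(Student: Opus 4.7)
The plan is to reduce part (1) to the structure of the automorphism group scheme of a quasi-minimal model, and to deduce part (2) from part (1) by means of the maximal rationally connected (MRC) fibration together with Theorem \ref{psrank}.

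For part (1), since the statement depends only on the birational class of $X$, I would replace $X$ by its polarized quasi-minimal model $Y$. By Lemma \ref{component}, there is a constant $b(Y)$ bounding the order of every finite subgroup of $\bir(Y)/\bir_0(Y)$. Setting $L(X) := b(Y)$, for any prime $p > L(X)$ and any finite $p$-subgroup $G \subset \bir(Y)$, the image of $G$ in the component group must be trivial, so $G \subset \bir_0(Y) = \aut_0(Y)$ by Theorem \ref{bir}. Because $X$ is not uniruled, $A := \aut_0(Y)$ is an abelian variety of dimension $\nu(X)$, so $G$ is automatically abelian. Using that the rank of a finite abelian $p$-group equals $\dim_{\mathbb{F}_p} G[p]$, and that $G[p] \hookrightarrow A[p] \cong (\mathbb{Z}/p\mathbb{Z})^{2\dim A}$, one concludes $r(G) \leq 2\nu(X)$.

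For part (2), let $f: X \dashrightarrow Z$ be the MRC fibration. Because it is birationally unique, it is functorial with respect to birational automorphisms, producing a homomorphism $\rho: \bir(X) \to \bir(Z)$ whose kernel acts on the generic fibre $X_\eta$ as $k(Z)$-birational automorphisms. Given a finite $p$-subgroup $G \subset \bir(X)$, set $G'' := \rho(G) \subset \bir(Z)$ and $G' := G \cap \ker\rho \subset \bir_{k(Z)}(X_\eta)$, so that $1 \to G' \to G \to G'' \to 1$ is exact. Since $Z$ is non-uniruled, part (1) applied to $Z$ gives an abelian $p$-group $G''$ with $r(G'') \leq 2\nu(Z) = 2\nu(X)$, provided $p > L(Z)$. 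Since $X_\eta$ is rationally connected of dimension $\delta(X)$ over $k(Z)$, Theorem \ref{psrank} yields an abelian $p$-group $G'$ with $r(G') \leq \delta(X)$, provided $p > L(\delta(X))$. Setting $L(X) := \max\{L(Z), L(\delta(X))\}$ and using the standard inequality $r(G) \leq r(G') + r(G'')$ for extensions, the bound $r(G) \leq \delta(X) + 2\nu(X)$ follows.

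The main technical subtlety is making the extension precise: one has to fix a concrete model of $Z$ and check that the MRC fibration genuinely yields a homomorphism $\rho: \bir(X) \to \bir(Z)$ whose kernel is identified with $\bir_{k(Z)}(X_\eta)$. This is routine thanks to birational uniqueness of the MRC fibration, but it does require handling the fact that $\bir$ behaves like a rational-map object until one fixes models. Once this point is settled, both parts become a direct assembly of Lemma \ref{component}, Theorem \ref{bir}, the $p$-torsion structure of abelian varieties, and Theorem \ref{psrank}.
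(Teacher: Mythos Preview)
Your proposal is correct and follows essentially the same route as the paper: reduce part (1) to a quasi-minimal model and invoke Lemma \ref{component} so that $G$ lands in the abelian variety $\aut_0(Y)$, then for part (2) use the MRC fibration to split $G$ into a fibre part controlled by Theorem \ref{psrank} and a base part controlled by part (1), with $L(X)=\max\{L(Z),L(\delta(X))\}$. Your write-up is in fact slightly more detailed than the paper's (the $p$-torsion argument for the rank bound and the remark on functoriality of the MRC fibration), but the structure and key inputs are identical.
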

\begin{proof}
	(1) As before, we may assume $X$ is the selected polarized quasi-minimal model. Let $L(X)$ be the constant $b(X)$ in Lemma \ref{component}. Given a finite $p$-group $G \subset \bir(X)$, by Lemma \ref{component}, it is contained in the neutral component $\bir_0(X)$, an abelian variety of dimension $v(X) = h^0(X, T_X)$. It follows that $G$ is abelian and of rank at most $2v(X)$.
	
	(2) Let $X \dashrightarrow Z$ the MRC fibration of $X$. The group $G$ descends to an action on $Y$, so that we have an exact sequence
	$$
		1 \to G' \to G \to G" \to 1.
	$$
	Where $G'$ (resp. $G"$) acts on the generic fibre $X_\eta$ (resp. the base $Z$) faithfully. Notice that both $G'$ and $G"$ are $p$-groups. When $p > L(X) := \max\{L(Z), L(\delta(X))\}$, $G"$ is abelian of rank $\leq 2v(X)$ by (1), and $G'$ is abelian of rank $\leq \delta(X)$ by Theorem \ref{psrank}. This gives $r(G) \leq r(G') + r(G") = \delta(X) + 2\nu(X)$.
\end{proof}

\begin{rem}
	The upper bounds in Theorem \ref{prank1} are optimal: the product of an $n$-dimensional projective space with an $m$-dimensional abelian variety admits a faithful action of $(\Z/p\Z)^{n + 2m}$ for arbitrarily large prime $p$. Notice that by Zarhin's example \cite{z}, the group $G$ in Theorem \ref{prank1} (2) is not necessarily abelian.
\end{rem}

We can also bound the rank of an arbitrary finite group of birational automorphisms of a variety by some constant depending only on the underlying variety. Modulo BABB's boundedness theorem, this is already known \cite[Remark 6.9]{ps2}. The proof is left to the reader.

\begin{prop}  \label{finiterank}
Let $X$ be an algebraic variety of dimension $n$.

	(1) There exists an constant $L_1(X) > 0$ (depending on the birational class of $X$) such that $r(G) \leq L_1(X)$ every finite subgroup $G \subset \bir(X)$;

	(2) There exists a constant $L_1(n)$ (depending only on $n$), such that $r(G) \leq L_1(n)$ for every rationally connected variety $X$ of dimension $n$ over any field and every finite subgroup $G \subset \bir(X)$;
\end{prop}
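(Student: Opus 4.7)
Both parts follow the standard paradigm of combining the maximal rationally connected fibration with boundedness of Fano varieties (Theorem \ref{fano}). For (2), I argue by induction on $n$. The base case $n = 1$ is immediate since the only rationally connected curve is $\BP^1$ and finite subgroups of $\pgl_2$ have rank at most $2$. For the inductive step, regularize a given finite $G \subset \bir(X)$ on a smooth projective $G$-equivariant model and run a $G$-equivariant MMP, terminating in either a $G$-Fano variety with terminal singularities or a non-trivial $G$-Mori fibre space $X' \to Z$. In the Fano case, BABB supplies an integer $m = m(n)$ such that $-mK_{X'}$ is very ample with $h^0(X', -mK_{X'})$ uniformly bounded by some $N(n)$; the resulting $G$-equivariant embedding realizes $G$ as a finite subgroup of $\pgl_{N(n)}$, whose rank is bounded by a function of $N(n)$ alone, via Jordan's theorem and the fact that abelian subgroups of $\gl_{N(n)}$ are simultaneously diagonalizable and hence have rank at most $N(n)$. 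In the Mori fibre space case, the exact sequence
$$
1 \to G' \to G \to G'' \to 1
$$
has $G''$ acting on the rationally connected base $Z$ of dimension $<n$ and $G'$ acting faithfully on the geometric generic fibre, which is rationally connected of dimension $<n$ over an algebraically closed overfield. Both terms fall under the inductive hypothesis, giving $r(G) \leq 2 L_1(n-1)$, and one takes the maximum over both cases.

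For (1), apply the MRC fibration $X \dashrightarrow Z$ with $Z$ non-uniruled and regularize, producing
$$
1 \to G' \to G \to G'' \to 1
$$
where $G'$ acts on the geometric generic fibre (rationally connected of dimension $\delta(X)$) and $G''$ on $Z$. Part (2) bounds $r(G') \leq L_1(\delta(X))$. For $r(G'')$, replace $Z$ by its polarized quasi-minimal model and split via Lemma \ref{component}: the subgroup $G'' \cap \bir_0(Z)$ sits inside the abelian variety $\aut_0(Z)$ of dimension $\nu(X)$ (contributing rank at most $2\nu(X)$), while the quotient $G''/(G'' \cap \bir_0(Z))$ has order at most $b(Z)$ and therefore rank bounded in terms of $b(Z)$. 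Summing produces the desired constant $L_1(X)$ depending only on the birational class of $X$.

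The principal obstacle is extracting a uniform rank bound in the Fano case of (2) from BABB's bounded family statement; this is achieved by using a uniform power of the anticanonical bundle to embed all such Fano varieties into a fixed projective space and then invoking Jordan's theorem. A minor subtlety—that the Mori fibre space induction operates over non-closed fields—is handled by passing to geometric generic fibres, which preserves both rational connectedness and faithfulness of the group action.
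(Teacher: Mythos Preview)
The paper leaves this proof to the reader, indicating only that it is the content of \cite[Remark 6.9]{ps2} once one has BABB; your argument is precisely the intended one and is correct. One cosmetic slip: your parenthetical citation ``boundedness of Fano varieties (Theorem \ref{fano})'' should point to Theorem \ref{babb}, since Theorem \ref{fano} is the toric rationality criterion, not the boundedness statement you actually use.
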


\section{Split algebraic tori} \label{lag}
In this section, $k$ is a field of characteristic zero, and $k_a$ is a fixed algebraic closure of $k$. When working with an algebraically closed field, we usually identify an algebraic variety with its rational points.

We start with a well-known theorem of Minkowski, and refer to \cite{s1} for an excellent exposition of the topic. Subsequently, we will prove some \emph{Jordan type} results on linear algebraic groups.

Given a prime number $p$ and an integer $x$, we denote by $v_p(x)$ the largest nonnegative integer such that $p^{v_p(x)} | x$. If $x$ is a real number, its integral part is denoted by $[x]$.

\begin{thm} \label{mink}
	Let $n \geq 1$ be an integer, and let $p$ be a prime number. Define
	$$
		M(n) = \prod\limits_p p^{M(n, p)}, ~~~\mbox{ where } ~~~ M(n, p) = \left[\cfrac{n}{p - 1} \right] + \left[\cfrac{n}{p(p - 1)} \right] + \left[\cfrac{n}{p^2(p - 1)} \right] + \cdots.
	$$
	Then (1) if $G$ is a finite subgroup of $\gl(n, \Z)$, we have $v_p(|G|) \leq M(n, p)$;
		
		(2) $|G|$ divides any $M(n)$ for every finite subgroup $G$ of $\gl(n,\Z)$, and $M(n)$ is the smallest number having this property.
\end{thm}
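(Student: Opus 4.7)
The plan is to prove (1) first — from which the divisibility in (2) follows by taking the product over primes — and then to construct, for each prime $p$, an explicit finite subgroup of $\gl(n,\Z)$ with $p$-part $p^{M(n,p)}$ to establish minimality.

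For (1), the strategy is the classical reduction-mod-$\ell$ trick. For any odd prime $\ell$, the principal congruence subgroup $\Gamma_\ell := \ker(\gl(n,\Z) \to \gl(n,\mathbb F_\ell))$ is torsion-free; a short $\ell$-adic valuation argument applied to $(I+\ell^k A)^q = I$ with $\ell \nmid A$ gives the contradiction (for $\ell = 2$ one uses $\Gamma_4$ instead). Consequently every finite subgroup $G \subset \gl(n,\Z)$ injects into $\gl(n,\mathbb F_\ell)$ for every odd prime $\ell$, so $|G|$ divides $|\gl(n,\mathbb F_\ell)|$.

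Given a prime $p$, I would then apply Dirichlet's theorem on primes in arithmetic progressions to choose an odd prime $\ell \neq p$ that is a primitive root modulo $p^2$. This forces the multiplicative order of $\ell$ mod $p$ to equal $p-1$ and, crucially, $v_p(\ell^{p-1}-1) = 1$ (not just $\geq 1$). Writing $|\gl(n,\mathbb F_\ell)| = \ell^{n(n-1)/2}\prod_{i=1}^{n}(\ell^i - 1)$, the factor $\ell^i - 1$ is divisible by $p$ iff $(p-1) \mid i$; setting $i = (p-1)j$ and invoking the lifting-the-exponent lemma yields $v_p(\ell^{(p-1)j}-1) = 1 + v_p(j)$. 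Summing,
$$
v_p(|\gl(n,\mathbb F_\ell)|) \;=\; \sum_{j=1}^{[n/(p-1)]} \bigl(1 + v_p(j)\bigr) \;=\; \left[\frac{n}{p-1}\right] + v_p\!\left(\left[\frac{n}{p-1}\right]!\right),
$$
and Legendre's formula $v_p(m!) = \sum_{k \geq 1}[m/p^k]$, together with the identity $[[n/(p-1)]/p^k] = [n/(p^k(p-1))]$, converts the right side into exactly $M(n,p)$. This proves (1), and the divisibility half of (2) is immediate.

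For the optimality half of (2), I would exhibit, for each prime $p$, a subgroup realizing the bound. For odd $p$, view $\Z[\zeta_p]$ as a free $\Z$-module of rank $p-1$ on which $\zeta_p$ acts by a matrix of order $p$ in $\gl(p-1,\Z)$; taking $r = [n/(p-1)]$ block-diagonal copies inside $\gl(n,\Z)$ and letting $S_r$ permute the blocks yields $(\Z/p\Z)^r \rtimes S_r$ with $p$-part $p^{r + v_p(r!)} = p^{M(n,p)}$. For $p=2$, the hyperoctahedral group $\{\pm 1\}^n \rtimes S_n$ of signed permutation matrices has $2$-part $2^{n + v_2(n!)} = 2^{M(n,2)}$. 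The main obstacle is pinning down $v_p(\ell^{p-1}-1)$ to be exactly $1$ — this is what forces the coordinated Dirichlet-plus-primitive-root-mod-$p^2$ choice of $\ell$, since otherwise LTE overcounts; the torsion-freeness of $\Gamma_\ell$ and the block-diagonal realizations are essentially bookkeeping.
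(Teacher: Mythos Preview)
The paper does not prove this theorem at all: it is quoted as a classical result of Minkowski with a reference to Serre's survey \cite{s1}. So there is no ``paper's own proof'' to compare against; your proposal is simply a proof of the cited result.

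Your argument is the standard Minkowski--Schur route and is correct for every \emph{odd} prime $p$. The reduction-mod-$\ell$ step, the Dirichlet choice of $\ell$ as a primitive root modulo $p^{2}$ (forcing $v_p(\ell^{p-1}-1)=1$), the LTE computation $v_p(\ell^{(p-1)j}-1)=1+v_p(j)$, and the Legendre rewriting are all fine, as are the block-diagonal realizations for optimality.

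There is, however, a genuine gap at $p=2$. The lifting-the-exponent lemma for $p=2$ has an extra term: for odd $\ell$ and even $j$ one has $v_2(\ell^{j}-1)=v_2(\ell-1)+v_2(\ell+1)+v_2(j)-1$, and since exactly one of $\ell\pm1$ is divisible by $4$, one cannot arrange $v_2(\ell^{j}-1)=1+v_2(j)$ for all $j$. In fact $v_2(|\gl(n,\mathbb F_\ell)|)>M(n,2)$ for every odd prime $\ell$ once $n\ge 2$; e.g.\ $M(2,2)=3$ while $v_2(|\gl(2,\mathbb F_3)|)=v_2(48)=4$. So the congruence-subgroup bound alone is strictly weaker than Minkowski's bound at $p=2$, and your outline as written does not deliver $v_2(|G|)\le M(n,2)$. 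You need a separate argument for $p=2$ --- for instance, the observation that $G$ preserves a positive-definite integral quadratic form and hence embeds in an orthogonal group, after which a short character or Schur-type computation gives the sharp $2$-part; this is how Serre handles it in \cite{s1}. Your optimality construction for $p=2$ (the hyperoctahedral group) is fine; it is only the upper bound that needs repair.
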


\begin{cor} (\cite[Corollary 2.14 ]{ps2}) \label{finitez}
	Let $N$ be a finitely generated abelian group. Then the group $\aut(N)$ has bounded finite subgroups.
\end{cor}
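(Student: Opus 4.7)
The plan is to reduce the statement to Minkowski's theorem (Theorem \ref{mink}) by exploiting the standard structure theorem for finitely generated abelian groups. Write $N \simeq \Z^r \oplus T$ where $r \geq 0$ and $T$ is the finite torsion subgroup. Since $T$ is characteristic in $N$ (being the set of torsion elements), every $\varphi \in \aut(N)$ preserves $T$ and induces an automorphism of $N/T \simeq \Z^r$. This produces a homomorphism
$$
\rho \colon \aut(N) \longrightarrow \aut(T) \times \aut(\Z^r) = \aut(T) \times \gl(r,\Z).
$$

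Next I would identify the kernel of $\rho$. An automorphism lying in $\ker \rho$ acts trivially on $T$ and descends to the identity on $N/T$, so it has the form
$$
(x, t) \longmapsto (x, \, t + f(x))
$$
for some homomorphism $f \colon \Z^r \to T$. Thus $\ker\rho \simeq \Hom(\Z^r, T) \simeq T^r$, a finite group of order $|T|^r$.

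Now let $G \subset \aut(N)$ be any finite subgroup. Its image $\rho(G)$ sits inside $\aut(T) \times \gl(r,\Z)$, and the projection of $\rho(G)$ to each factor is a finite subgroup. The first projection has order at most $|\aut(T)|$, which depends only on $T$; by Theorem \ref{mink}(2), the second projection has order dividing $M(r)$. Combining with the kernel bound,
$$
|G| \leq |\ker\rho| \cdot |\rho(G)| \leq |T|^r \cdot |\aut(T)| \cdot M(r),
$$
and the right-hand side depends only on $N$, not on $G$. This proves the corollary.

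No step is really an obstacle — the only mild subtlety is making sure one sees that the torsion subgroup is characteristic (so that $\rho$ is well-defined) and correctly identifying $\ker\rho$ with $\Hom(\Z^r, T)$; everything else is a direct appeal to Minkowski.
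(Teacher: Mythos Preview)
Your argument is correct. The paper does not actually supply a proof of this corollary --- it is stated as a citation of \cite[Corollary 2.14]{ps2}, placed immediately after Minkowski's theorem to indicate the source of the bound --- so there is no ``paper's own proof'' to compare against. Your derivation is the standard one and is precisely the intended deduction from Theorem~\ref{mink}: split off the torsion subgroup, observe it is characteristic, and bound the three pieces (the unipotent kernel $\Hom(\Z^r,T)$, the finite factor $\aut(T)$, and the arithmetic factor via Minkowski). One tiny stylistic point: when you bound $|\rho(G)|$ by the product of the orders of its two projections, it is cleaner to phrase this via the exact sequence $1 \to \rho(G) \cap (\aut(T)\times\{1\}) \to \rho(G) \to \mathrm{pr}_2(\rho(G)) \to 1$, since in general a subgroup of a product need not have order bounded by the product of its projections; here it works because $\aut(T)$ is already finite, but making that explicit removes any ambiguity.
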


An algebraic group $D$ over $k$ is \emph{diagonalizable} if $D \otimes_k k_a$ is isomorphic to a closed subgroup of $\gr_{m, k_a}^r$. A connected diagonalizable group is called an algebraic torus. One can show that an algebraic torus is nothing but a connected algebraic group $T$ over $k$ such that $T \otimes_k k_a \simeq \gr_{m, k_a}^r$. A diagonalizable group $D$ is called \emph{split over} $k$ if $D$ is isomorphic to a closed subgroup of $\gr_{m, k}^r$ over $k$.

Given a diagonalizable group $D$, denote its character group by $X^*(D) := \Hom(D_{k^a},\gr_{m, k_a})$ (written additively). This is a finitely generated abelian group. The absolute Galois group $\gal(k_a/k)$ acts continuously on $X^*(D)$ via
	$$
		(\prescript{\sigma}{}{\chi})(s) = \sigma(\chi(\sigma^{-1}(s))), \hspace{5mm} \chi \in X^*(D), ~\sigma \in \gal(k_a/k), ~s \in D,
	$$
	where $\gal(k_a/k)$ and $X^*(D)$ are endowed respectively, with the Krull topology and the discrete topology. The image of the homomorphism $\gal(k_a/k) \to \aut(D)$ is always a finite group.

The assignment $T \rightsquigarrow X^*(T)$ defines a (contravariant) functor $X^*$ from the category of diagonalizable groups over $k$ to the category of finitely generated abelian groups with a continuous action of $\gal(k_a/k)$. One knows that $X^*$ is an anti-equivalence of categories and takes exact sequences to exact sequences \cite[Theorem 12.23]{m}\footnote{Diagonalizable groups (resp. groups of multiplicative type) in the book \cite{m} correspond to split diagonalizable groups (resp. diagonalizable groups) in our setting.}. Under this equivalence, split diagonalizable groups over $k$ correspond to finitely generated abelian groups on which $\gal(k_a/k)$ acts trivially, and vice verse.

A closed subgroup of a split diagonalizable group is always split. Conversely, we are interested in knowing when a split subgroup of a torus is contained in a split subtorus. A simple consideration shows that this is not always the case: the $1$-dimensional real torus $S = \spec \R[x, y]/(x^2 + y^2 - 1)$ contains a split subgroup $D = \spec \R[x]/(x^2 - 1)$, but $S$ itself is not split over $\R$.
  
\begin{lem} \label{splittorus}
	Let $T$ be an $n$-dimensional algebraic torus over $k$, and let $D \subset T$ be a closed finite subgroup scheme. If $D$ is split over $k$ and $d = |D(k_a)|$ is coprime to $M(n)$, then $D$ is contained in a split subtorus $S \subset T$.
\end{lem}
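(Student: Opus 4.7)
The plan is to translate the statement into a problem about character lattices via the anti-equivalence $X^*$ recalled just before the lemma, and then exploit Minkowski's bound (Theorem \ref{mink}) applied to the finite image of $\gal(k_a/k)$ in $\aut(X^*(T))$.

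Concretely, I would put $\Lambda := X^*(T)$, a free $\Z$-module of rank $n$ on which $\gal(k_a/k)$ acts through a finite subgroup $\Gamma \subset \gl(n,\Z) = \aut(\Lambda)$. The inclusion $D \hookrightarrow T$ corresponds to a surjection $\Lambda \twoheadrightarrow X^*(D)$; let $K$ be its kernel, so $|\Lambda/K| = d$. Since $D$ is split, $\Gamma$ acts trivially on $\Lambda/K$, which means $L := \sum_{\sigma\in\Gamma}(\sigma-1)\Lambda \subseteq K$. Finding a split subtorus $S$ with $D \subseteq S \subseteq T$ is equivalent, by duality, to finding a saturated $\Gamma$-stable sublattice $N$ of $\Lambda$ with $L \subseteq N \subseteq K$, because then $\Lambda/N$ is free and has trivial Galois action, so $S := \Hom(\Lambda/N,\gm)$ is a split subtorus fitting in the chain. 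The natural candidate is the saturation $N := \overline{L} = \{\chi\in\Lambda : m\chi \in L \text{ for some } m\geq 1\}$; it is clearly $\Gamma$-stable and saturated, so the only thing left to prove is $\overline{L}\subseteq K$.

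The main step is the bound on the exponent of $\overline{L}/L$. I would use the norm map $N_\Gamma:\Lambda\to\Lambda$, $N_\Gamma(\chi) = \sum_{\sigma\in\Gamma}\sigma\chi$. A short computation gives
\[
N_\Gamma(\chi) = |\Gamma|\chi + \sum_{\sigma\in\Gamma}(\sigma-1)\chi, \qquad N_\Gamma\bigl((\sigma-1)\Lambda\bigr)=0,
\]
so $N_\Gamma(L)=0$. Now for $\chi\in\overline{L}$, choose $m\geq 1$ with $m\chi\in L$; then $m\,N_\Gamma(\chi)=N_\Gamma(m\chi)=0$, and since $\Lambda$ is torsion-free we obtain $N_\Gamma(\chi)=0$. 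The identity above then forces
\[
|\Gamma|\cdot\chi \;=\; -\sum_{\sigma\in\Gamma}(\sigma-1)\chi \;\in\; L,
\]
so $\overline{L}/L$ is annihilated by $|\Gamma|$. By Minkowski (Theorem \ref{mink}), $|\Gamma|$ divides $M(n)$.

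To finish, I would combine this with the hypothesis on $d$. For any $\chi\in\overline{L}$, $|\Gamma|\chi\in L\subseteq K$, so the image of $\chi$ in $\Lambda/K$ has order dividing both $|\Gamma|\mid M(n)$ and $d=|\Lambda/K|$; since $\gcd(M(n),d)=1$, this image vanishes, i.e.\ $\chi\in K$. Thus $\overline{L}\subseteq K$, and taking $S$ associated to $\Lambda/\overline{L}$ produces the desired split subtorus of $T$ containing $D$. The only real obstacle is the exponent bound on $\overline{L}/L$; everything else is bookkeeping in the character-lattice dictionary.
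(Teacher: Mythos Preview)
Your proof is correct and follows essentially the same route as the paper's: both pass to the character lattice, invoke the norm map $\chi\mapsto\sum_{\sigma}\sigma\chi$ together with Minkowski's bound $|\Gamma|\mid M(n)$, and then use coprimality of $|\Gamma|$ and $d$. The only cosmetic difference is that the paper works on the \emph{image} side of the norm (choosing $m$ with $m|\Gamma|\equiv 1\pmod d$ to display the factorization $X^*(T)\twoheadrightarrow\im(\pi)\twoheadrightarrow X^*(D)$ directly), while you work on the \emph{kernel} side with the saturation $\overline{L}$ of $\sum_{\sigma}(\sigma-1)\Lambda$, which is precisely $\ker(\pi)$; the resulting split subtorus $S$ is the same in both arguments.
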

\begin{proof}
	Applying the functor $X^*$ to the exact sequence of diagonalizable $k$-groups
	$$
	1 \to D \to T \to T/D \to 1,
	$$
	we get an exact sequence of Galois modules
	$$
	0 \to X^*(T/D) \to X^*(T) \xrightarrow{|_D} X^*(D) \to 0. \eqno{(1)}
	$$
	Let $G$ be the image of the homomorphism $\gal(k_a/k) \to \aut(T) \simeq \gl(n, \Z)$, and keep in mind that $g := |G|$ divides $M(n)$. Since $D$ is split and finite over $k$, the group of Galois invariant characters $X^*(D)^G$ coincides with $X^*(D)$, which is (non-canonically) isomorphic, as an abstract group, to $D(k_a)$.
	
	Define the map $\pi: X^*(T) \to X^*(T)^G$ by $\pi(\chi) = \sum_{\sigma \in G} \prescript{\sigma}{}{\chi}$. It is a homomorphism of $G$-modules.
		
	Since $d$ and $g$ are coprime, we can choose a positive integer $m$ such that $mg \equiv 1 (\mod d)$. One verifies that $m\pi(\chi)|_D = m(\sum\prescript{\sigma}{}{\chi})|_D = mg\chi|_D = \chi|_D$, i.e., the following diagram commutes:
	$$
	\xymatrix{
	X^*(T) \ar[d]^\pi \ar[rr]^{|_D} & & X^*(D) \ar@{=}[d]\\
	X^*(T)^G \ar[r]^{|_D} & X^*(D)^G \ar[r]^-{\times m} & X^*(D)^G
	}
	$$
	Let $N$ be the image of $\pi$ in $X^*(T)^G$. Then the restriction $X^*(T) \to X^*(D)$ factors into surjections $X^*(T) \to N \to X^*(D)$ of Galois modules. Notice that $N$ is a free abelian group on which $G$ acts trivially, it corresponds to a split subtorus $S \subset T$. Thus we get the desired closed embeddings $D \hookrightarrow S \hookrightarrow T$.
	\end{proof}

\begin{cor} \label{split1}
	Let $p > n + 1$ be a prime number, $k$ be a field that contains a primitive $p$-th root of unity, and $T$ be an $n$-dimensional torus over $k$. If $T(k)$ has a subgroup $D$ isomorphic to $(\Z/p\Z)^r$ for some $r > 0$, then $D$ is contained in a split subtorus of $T$ of dimension $\geq r$.
\end{cor}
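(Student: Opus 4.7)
The plan is to recognize this as a direct specialization of Lemma \ref{splittorus}, once we check that the hypotheses (splitness of $D$ and coprimality of $|D|$ with $M(n)$) are satisfied, and then to extract the dimension bound from the structure of $p$-torsion in split tori.

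First, I would observe that the subgroup $D \subset T(k)$ corresponds to a closed constant (hence $k$-split) subgroup scheme of $T$. Indeed, since $k$ contains a primitive $p$-th root of unity, the group scheme $\mu_p$ is isomorphic to the constant scheme $\Z/p\Z$ over $k$, and a copy of $(\Z/p\Z)^r$ sitting inside $T(k)$ determines an embedding $\mu_p^r \hookrightarrow T$ defined over $k$. Thus the ``splitness over $k$'' hypothesis of Lemma \ref{splittorus} is automatic.

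Next, I would verify the arithmetic hypothesis $\gcd(|D(k_a)|, M(n)) = 1$. Here $|D(k_a)| = p^r$, so it suffices to show $p \nmid M(n)$, i.e.\ $M(n,p) = 0$. Using the formula of Theorem \ref{mink}, every summand in $M(n,p)$ involves $[n/(p^i(p-1))]$; since $p > n+1$ we have $p - 1 > n$, so $n/(p-1) < 1$ and a fortiori every later term is less than $1$. All floor terms vanish, giving $M(n,p) = 0$ and hence $p \nmid M(n)$. Applying Lemma \ref{splittorus}, we obtain a split subtorus $S \subset T$ containing $D$.

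The remaining step, which is the main thing to verify beyond a straightforward invocation of the lemma, is the dimension bound $\dim S \geq r$. The idea is simply that a split torus controls its $p$-torsion: if $S \simeq \gm^{s}$ over $k$, then $S[p] \simeq \mu_p^{s}$, and because $k$ contains a primitive $p$-th root of unity this gives $S[p](k) \simeq (\Z/p\Z)^{s}$. Since $D \cong (\Z/p\Z)^r$ is $p$-torsion in $S(k)$, it embeds into $(\Z/p\Z)^{s}$, forcing $r \leq s = \dim S$. This concludes the proof. I expect no significant obstacle: the corollary is essentially a packaging of Lemma \ref{splittorus} with the elementary numerical observation that $M(n,p) = 0$ for $p > n+1$, plus the standard fact about $p$-torsion of split tori over a field containing $\mu_p$.
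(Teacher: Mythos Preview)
Your proof is correct and follows essentially the same approach as the paper: verify that $D$ is a split constant subgroup scheme (using that $k$ contains $\mu_p$), check $M(n,p)=0$ for $p>n+1$, invoke Lemma \ref{splittorus}, and read off the dimension bound. The paper dismisses the last step with ``clearly the dimension of $S$ is at least $r$,'' whereas you supply the explicit justification via $S[p](k)\simeq(\Z/p\Z)^{\dim S}$, but otherwise the arguments coincide.
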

\begin{proof}
	The group $D$ can be viewed as a constant group scheme over $k$. It splits over $k$ since $k$ contains all $p$-th roots of unity. The prime number $p$ is coprime to $M(n)$ because by Theorem \ref{mink} we have $M(n, p) = 0$ when $p > n + 1$. Hence $D$ is contained in a split subtorus $S$ of $T$ by Lemma \ref{splittorus}. Clearly the dimension of $S$ is at least $r$.
\end{proof}

\begin{exam} \label{realcircle} %% A counter example that the assumption on roots of unity is necessary.
	The assumption that $k$ contains a primitive root of unity is necessary because of the following example. Let $k = \R$ be the field of real numbers, and $S = \spec \R[x, y]/(x^2 + y^2 - 1)$. $S$ is a $1$-dimensional real algebraic torus, whose $\R$-points form the circle group
	$$
		S(\R) = \{(x, y) \in \R^2| x^2 + y^2 = 1\}.
	$$
	Hence $C(\R)$ contains elements of arbitrarily large order, but it is not isomorphic to $\gr_{m}$ over $\R$.
\end{exam}

%\begin{exam}
%	A $1$-dimensional algebraic torus over $k$ is isomorphic to either $\gr_{m, k}$ or the norm $1$ torus $R_{L/k}^{(1)}(\gr_{m})$, which is defined to be the kernel of the norm map $R_{L/k}(\gr_m) \to \gr_m$ and $L$ is a degree $2$ extension of $k$.
	
%	Fix a prime number $p \geq 3$, and let $\zeta \in k_a$ be a primitive $p$-th root of unity. If the degree of field extension $[k(\zeta_p):k] \leq 2$, we say that $p$ is \emph{quadratic} over $k$. For such a prime, either $\gr_m(k)$ or $R_{k(\zeta_p)/k}^{(1)}(\gr_m)(k)$ contains an element of order $p$. Conversely, given a $1$-dimensional torus $T$ over $k$ such that $T(k)$ contains an element of order $p$. By Theorem \ref{splittorus}, $T \simeq \gr_m$ over $k$ if $\zeta_p \in k$, otherwise $T \simeq R_{L/k}^{(1)}(\gr_m) \subset R_{L/k}(\gr_m)$ over $k$ for some quadratic extension $L/k$, as $R_{L/k}^{(1)}(\gr_m)(k) = L^*$, hence $L$ contains an element of order $p$, and $L \simeq k(\zeta_p)$. We denote by $S_p := R_{k(\zeta_p)/k}^{(1)}(\gr_m)$.
%\end{exam}

%\begin{cor}
%	Let $k$ be a field, and $p > \gamma(n)$ be a quadratic prime over $k$. If $T$ is an $n$-dimensional algebraic torus over $k$ such that $T(k)$ contains a subgroup isomorphic to $(\Z/p\Z)^n$. Then $T$ is isomorphic to $\gr_m^n$ or $R_{k(\zeta_p)/k}^{(1)}(\gr_m)^n$.
%\end{cor}

Recall that a \emph{maximal torus} $T$ of a linear algebraic group $G$ is a subtorus of $G$ which is not contained in another one. The \emph{rank} $r(G)$ is the dimension of a maximal torus of $G$. As maximal tori of a linear algebraic group are conjugate to each other, the rank is well defined. Beware that the rank of an algebraic group is not the same thing as that of a finite group, but this will be clear from the context when they appear. We also remind the reader that a maximal torus of an algebraic group $G$ over $k$ is not necessarily defined over the same field $k$.

\begin{thm} \cite[Theorem 4.2]{ps1} \label{fixed}
	There exists a constant $F(n)$ (depending only of $n$) such that for any rationally connected variety $X$ of dimension $n$ over an algebraically closed field of characteristic zero, and for any finite subgroup $G \subset \aut(X)$, there exists a subgroup $N \subset G$ of index at most $F(n)$ acting on $X$ with a fixed point.
\end{thm}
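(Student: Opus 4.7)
The plan is to induct on $n$ using the $G$-equivariant minimal model program and BAB boundedness. First, by $G$-equivariant resolution of singularities and compactification I replace $X$ by a smooth projective variety carrying a biregular $G$-action. Since $X$ is rationally connected, $K_X$ is not pseudo-effective, so a $G$-equivariant MMP terminates in a $G$-equivariant Mori fibre space $\pi: X' \to Z$, where $X'$ has $\mathbb{Q}$-factorial terminal singularities and the $G$-action on $X'$ extends that on a common open subset.

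If $\dim Z \geq 1$, then $Z$ is rationally connected (as it is dominated by the rationally connected $X$) and of dimension strictly less than $n$. By the inductive hypothesis there is a subgroup $G_1 \subset G$ of index at most $F(\dim Z)$ fixing some point $z \in Z$. A further subgroup $G_2 \subset G_1$ of index bounded uniformly in $n$ stabilizes an irreducible component $Y$ of the fibre $\pi^{-1}(z)$; the number of such components is uniformly bounded because general fibres of $\pi$ lie in a bounded family of Fano varieties by relative BAB boundedness, giving control on the Hilbert polynomial and hence on the components of any fibre. Since $-K_{X'}$ is $\pi$-ample, the fibres of $\pi$ are rationally chain connected, so a $G_2$-equivariant resolution $\widetilde Y \to Y$ is rationally connected of dimension strictly less than $n$; a second application of the induction hypothesis to $\widetilde Y$ produces the desired bounded-index subgroup fixing a point of $\widetilde Y$, hence of $X$.

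If $\dim Z = 0$, then $X'$ is an $n$-dimensional $G$-Fano variety with terminal singularities. By Birkar's boundedness theorem such varieties lie in a bounded family, so there is an integer $m = m(n)$ such that $-mK_{X'}$ is very ample, yielding a $G$-equivariant embedding $X' \hookrightarrow \mathbb{P}^{N(n)}$ and hence an injection $G \hookrightarrow \mathrm{PGL}(N(n)+1)$. By Jordan's classical theorem applied to a lift in $\mathrm{GL}(N(n)+1)$, the group $G$ contains a normal abelian subgroup $A$ of index bounded in terms of $n$ only. It then remains to produce an $A$-fixed point on $X'$ itself (possibly after a further bounded-index reduction): $A$ can be diagonalized on $\mathbb{P}^{N}$, so its fixed locus there is a disjoint union of linear subspaces, and one must show at least one of these intersects $X'$.

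The main obstacle is precisely this last step, since finite abelian groups acting on arbitrary projective varieties need not have fixed points. The argument exploits rational connectedness: for each prime $p$ dividing $|A|$ with $p$ large relative to $n$, the Sylow $p$-subgroup $A_p$ has non-empty fixed locus on $X'$ by Esnault-type mod-$p$ fixed-point results for rationally chain connected varieties, and these fixed loci are themselves rationally chain connected subvarieties of $X'$. A careful combination across the finitely many primes dividing $|A|$, controlled via BAB to force a uniform bound on the number of components of each $X'^{A_p}$, produces a common fixed point for a subgroup of $A$ of bounded index, completing the induction.
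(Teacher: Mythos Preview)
The paper does not prove this statement: it is simply quoted from \cite[Theorem~4.2]{ps1}, with no argument given. So there is no ``paper's own proof'' to compare against. Your outline is in fact broadly the strategy of Prokhorov--Shramov: run an equivariant MMP, treat the fibred case by induction on the base and then on a component of a fibre, and in the Fano case use BAB to embed in a bounded $\BP^{N}$, invoke Jordan's theorem to reduce to an abelian subgroup, and then produce a fixed point for that abelian subgroup.

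There are, however, two soft spots in your sketch. The minor one is the bound on the number of irreducible components of the special fibre $\pi^{-1}(z)$: boundedness of the \emph{general} fibre and its Hilbert polynomial does not by itself control \emph{special} fibres (Mori fibre spaces need not be flat), and one also needs each \emph{component} of the fibre, not just the whole fibre, to be rationally chain connected in order to run the induction on it. Both points are repairable, but not by the sentence you wrote.

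The serious gap is the abelian Fano step, which is the heart of the theorem. Your appeal to ``Esnault-type mod-$p$ fixed-point results'' is not how \cite{ps1} proceeds and is not a standard tool here; Esnault's theorem is a point-count congruence over finite fields, and converting it into a characteristic-zero fixed-point statement for a $p$-group on a rationally connected variety requires a spreading-out argument and a choice of auxiliary prime that you have not supplied (and which is in any case redundant, since holomorphic Lefschetz already gives $(X')^{g} \neq \emptyset$ for every $g$ from $\chi(\oo_{X'}) = 1$). Your further assertion that the loci $(X')^{A_p}$ are themselves rationally chain connected is unjustified --- for small $p$ it is plainly false (an involution on a suitable cubic surface fixes an elliptic curve), and you give no reason why large $p$ should change this --- so the proposed ``combination across primes'' cannot proceed by iterating the induction hypothesis on those loci. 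As written, your final paragraph is a hope rather than an argument.
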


\begin{lem} \label{torus} % Existence of torus containing the elementary abelian $p$-group.
	There exists a constant $B(n) > 0$ such that for any connected linear algebraic group $G$ of rank $\leq n$ over any algebraically closed field $k$, and any finite subgroup $H \subset G$, there exists a subgroup $N \subset H$ of index $[H:N] \leq B(n)$ such that $N$ contained in a maximal torus of $G$.
	
	In particular, if $H \subset G$ is a finite $p$-subgroup, then $H$ is contained in a maximal torus whenever $p > B(n)$.
\end{lem}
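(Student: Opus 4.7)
The strategy is to combine three ingredients: reduction to the reductive case via the unipotent radical, Jordan's theorem inside a faithful embedding $G \hookrightarrow \mathrm{GL}_N$, and a diagonalization step placing a finite commuting set of semisimple elements inside a maximal torus up to a bounded-index obstruction coming from the Weyl group. Two features of the section's standing hypothesis $\mathrm{char}(k) = 0$ will be used essentially: connected unipotent groups over $k$ have torsion-free groups of $k$-points, and every finite-order element of a linear algebraic group is automatically semisimple. (If one dropped characteristic zero, the conclusion would already fail for $G = \ga$, whose $k$-points contain arbitrarily large finite $p$-subgroups while its maximal torus is trivial, so this hypothesis is genuinely necessary for the statement as worded.)

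First I would reduce to the case $G$ reductive. Let $U = R_u(G)$ and $\bar G = G/U$; since $U(k)$ is torsion-free, $H \cap U(k) = 1$ and $H$ embeds into $\bar G(k)$. Maximal tori of $G$ lift bijectively from those of $\bar G$, so we may assume $G$ is connected reductive of rank $\leq n$. Next I would bound $\dim G$ in terms of $n$: the derived subgroup $[G, G]$ is semisimple of rank $\leq n$, and semisimple groups of rank $\leq n$ over an algebraically closed field fall into finitely many isomorphism classes (by the classification via root data), so $\dim [G, G] \leq \sigma(n)$ for an explicit $\sigma$, whence $\dim G \leq \sigma(n) + n$. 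A faithful rational representation then embeds $G$ into $\mathrm{GL}_N$ with $N \leq N(n)$. Jordan's theorem for $\mathrm{GL}_N(k)$, valid in characteristic zero, then produces a normal abelian subgroup $A \triangleleft H$ of index $[H : A] \leq J(N(n))$.

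Every element of $A$ has finite order and is therefore semisimple. The key remaining claim is that any finite commutative subgroup of semisimple elements of a connected reductive group $G$ admits a subgroup of index at most $|W(G)|$ contained in a single maximal torus. I would prove this by induction on the semisimple rank: pick a sufficiently generic $s \in A$ so that $C_G(s)^\circ$ is a proper Levi subgroup (or a maximal torus itself), apply the inductive hypothesis to $A \cap C_G(s)^\circ \subset C_G(s)^\circ$, and absorb the component-group obstructions at each step into the Weyl group. Since $|W(G)|$ is bounded in terms of $\mathrm{rank}(G) \leq n$ by an explicit $w(n)$, this yields a subgroup $N \subset A$ of index at most $w(n)$ inside a maximal torus $T$ of $G$. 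Setting $B(n) := J(N(n)) \cdot w(n)$ then gives $[H : N] \leq B(n)$. The ``in particular'' clause is then immediate: for $H$ a $p$-subgroup with $p > B(n)$, the index $[H : N]$ is a $p$-power that is $\leq B(n) < p$, so $[H : N] = 1$ and $H = N$ itself lies in a maximal torus.

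The main obstacle will be this diagonalization step: in non-simply-connected reductive groups, a commuting set of semisimple elements need not lie in any single maximal torus (the Klein four group in $\mathrm{PGL}_2$ generated by the classes of $\mathrm{diag}(1, -1)$ and an anti-diagonal involution is the standard example), and the key technical point is to control the index obstruction uniformly by the Weyl group of $G$, rather than picking up further factors from the ambient $\mathrm{GL}_N$ used to invoke Jordan's theorem.
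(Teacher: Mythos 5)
Your proposal is correct in its essentials, but it takes a genuinely different route from the paper's. The paper never reduces to the reductive case: it takes a Borel subgroup $B \subset G$, notes that $X = G/B$ is a projective rationally connected variety of dimension bounded by a constant $\beta(n)$ (finitely many semisimple root systems of rank $\leq n$), and applies the Prokhorov--Shramov almost-fixed-point theorem (Theorem \ref{fixed}) to the $H$-action on $X$: a subgroup $N \subset H$ of index at most $F(\beta(n))$ fixes a point $xB$, hence lies in the Borel $xBx^{-1}$, and since $N$ consists of semisimple elements (characteristic zero), the standard fact that a subgroup of semisimple elements of a connected solvable group lies in a maximal torus finishes the proof, with $B(n) = F(\beta(n))$. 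That argument is short but rests on the boundedness of Fano varieties through Theorem \ref{fixed}. Your route --- kill the unipotent radical, apply Jordan's theorem inside a $\mathrm{GL}_N$ of bounded size, then diagonalize an abelian group of semisimple elements up to bounded index --- stays entirely within the theory of linear algebraic groups, and is precisely the kind of proof the paper's own remark after the lemma predicts should exist (it cites Springer--Steinberg there for the semisimple $p$-group case) and notes would be interesting because it could yield an explicit $B(n)$, which the paper's constant, filtered through $F(n)$ and hence through BAB-type boundedness, does not. Your ``key remaining claim'' is in fact the Springer--Steinberg/Borel--Serre theorem: every supersolvable (in particular abelian) group of semisimple elements of a connected linear algebraic group normalizes a maximal torus $T$, so the kernel of $A \to N_G(T)/T = W$ is $A \cap T$, of index at most $|W|$ --- exactly your asserted bound, so the ``main obstacle'' you flag is a known result.

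Three repairs are needed, none fatal since any bound depending only on $n$ suffices. (i) The reduction to $\bar G = G/R_u(G)$ is glossed: from $\bar N \subset \bar T$ you only learn $N \subset \pi^{-1}(\bar T)$, a connected solvable group, and you must then invoke the same solvable-group fact the paper uses to place $N$ in a maximal torus of $\pi^{-1}(\bar T)$, which is one of $G$; ``maximal tori lift bijectively'' by itself does not transport the containment. (ii) In your induction, for $s$ semisimple of finite order the group $C_G(s)^\circ$ is a full-rank reductive (pseudo-Levi) subgroup but in general \emph{not} a Levi subgroup (e.g.\ the centralizer of an involution in a group of type $G_2$ has type $A_1 \times \tilde A_1$), and the semisimple rank need not drop; the induction should run on $\dim$, choosing any noncentral $s \in A$, with the index loss at each step bounded by $|C_G(s)/C_G(s)^\circ|$, which is controlled by the (bounded) fundamental group of the derived group --- this yields a bound like $c(n)^{\dim G}$ rather than $|W(G)|$, which is harmless here. (iii) The step ``$\dim G$ bounded $\Rightarrow$ faithful representation of bounded dimension $N(n)$'' is not a formal consequence of bounded dimension and deserves a sentence: for connected reductive $G$ of rank $\leq n$ it holds because $G$ is a quotient of $Z(G)^\circ \times [G,G]$ by a finite central subgroup of bounded order, so one can twist a bounded faithful representation of $[G,G]$ by suitable characters of $Z(G)^\circ$ and add a faithful representation of the torus $G/[G,G]$.
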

\begin{proof}
	Take a Borel subgroup $B$ of $G$, the homogeneous space $X = G/B$ is a projective rationally connected variety, whose dimension is bounded by a constant $\beta(n)$ depending only on $n$. The group $H$ acts on $X$ by translation. By Theorem \ref{fixed}, there exists a subgroup $N \subset H$ of index at most $F(\beta(n))$ acting on $X$ with a fixed point, say $xB$, so that $gxB = xB$ for all $g \in N$. Thus $N$ is contained in the Borel subgroup $xBx^{-1}$ of $G$. Note that $N$ consists of semi-simple elements. A subgroup of semi-simple elements of a connected solvable is always contained in a maximal torus. Hence $N$ is contained in a maximal torus $T$ of $xBx^{-1}$. $T$ is clearly a maximal torus of $G$ as well. The constant $B(n)$ can be taken to be $F(\beta(n))$.
\end{proof}

\begin{rem}
Since the normalizer of a Borel subgroup is always itself, Lemma \ref{torus} is essentially equivalent to find the fixed point of action of $H$ on $G/B$. In the above argument in order to obtain the constant $B(n)$, we use the almost fixed point property of actions of finite groups on rationally connected varieties, which depends on the boundedness theorem of Fano varieties.

When $G$ is semisimple and assume for simplicity $H \subset G$ is a $p$-group, the author learned from Brion that Lemma \ref{torus} follows from a theorem of Springer and Steinberg, which says that $H$ normalizes a maximal torus $T$ of $G$. If $p$ does not divide the order of the Weyl group $W = N_G(T)/T$ (whose size can be bounded by the rank of $G$), then the image of $H$ in $W$ is trivial, hence $H$ is contained in $T$. As this is a question of algebraic groups, we believe a proof of Lemma \ref{torus} using only linear algebraic groups is possible, and finding explicit value of $B(n)$ might be interesting.

Lemma \ref{torus} fails if the prime $p$ is too small: the group $\pgl(2, \C)$ contains a subgroup $H \simeq \Z/2\Z \times \Z/2\Z$, but there is no maximal torus or Borel subgroup containing $H$. 
\end{rem}

Given a connected semisimple algebraic group $G$ of rank $n$ over an algebraically closed field, it is known that the order of its center is bounded by a constant depending only on $n$ (for example, one may take $n^n$ by \cite[Lemma 2.9]{mz}). Observe that the center is the intersection of all maximal tori in $G$. In what follows, we will need a more general result of this type.

\begin{lem} \label{center}
	There exists a constant $E(n)$ such that for any connected reductive group $G$ of rank $\leq n$ over any algebraically closed field $k$, and any nonempty set $\x{T}$ of maximal tori of $G$, the intersection
	$$
		D =  \bigcap_{T \in \x{T}} T
	$$
	has at most $E(n)$ connected components.
\end{lem}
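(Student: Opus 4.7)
The plan is to identify $D$ with the center of a specific connected reductive subgroup of $G$, and then invoke the standard bound on the component group of the center of a reductive group of bounded rank, as hinted at by the remark before the statement. Set $S := Z_G(D)^0$, the identity component of the scheme-theoretic centralizer of $D$. Since $D$ is contained in a torus (any $T \in \mathscr{T}$), it is diagonalizable, and Steinberg's theorem ensures that the connected centralizer of a diagonalizable subgroup in a connected reductive group is again reductive; so $S$ is connected reductive. Each $T \in \mathscr{T}$ is abelian and contains $D$, hence $T \subset Z_G(D)$, and being connected, $T \subset S$. Thus $S$ contains the $n$-dimensional torus $T$ and is contained in the rank-$n$ group $G$, so the rank of $S$ is exactly $n$ and every element of $\mathscr{T}$ is a maximal torus of $S$.

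Next, I would show that $D$ coincides with the center $Z(S)$. The inclusion $D \subset Z(S)$ is immediate from the definition of $S$. For the converse, any central element of the connected reductive group $S$ is semisimple and hence lies in some maximal torus; since all maximal tori of $S$ are conjugate and a central element is invariant under conjugation, every central element of $S$ lies in \emph{every} maximal torus of $S$. In particular $Z(S) \subset T$ for each $T \in \mathscr{T}$, so
$$
Z(S) \subset \bigcap_{T \in \mathscr{T}} T = D,
$$
giving $D = Z(S)$. This reduces the lemma to the bound on $|\pi_0(Z(S))|$ for $S$ connected reductive of rank at most $n$.

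To finish, I would use the almost direct product decomposition $S = Z(S)^0 \cdot [S,S]$. A short computation gives $Z(S) = Z(S)^0 \cdot Z([S,S])$, whence
$$
Z(S)/Z(S)^0 \;\cong\; Z([S,S])\big/\bigl(Z(S)^0 \cap Z([S,S])\bigr),
$$
which is a quotient of $Z([S,S])$. Since $[S,S]$ is semisimple of rank at most $n$, the order of its center is bounded in terms of $n$ alone (via the classification of simply-connected simple groups, or by \cite[Lemma 2.9]{mz}), and one may take $E(n)$ to be this bound. The main point requiring care is the identification $D = Z(S)$: the inclusion $Z(S) \subset D$ is not formal but uses the specific reductive-group fact that central semisimple elements lie in every maximal torus, without which the argument would collapse.
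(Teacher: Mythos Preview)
Your argument is correct and takes a genuinely different route from the paper's. Both proofs identify $D$ with the center of an intermediate connected subgroup, but the choice of subgroup and the subsequent analysis differ. The paper takes $Q$ to be the subgroup \emph{generated} by the tori in $\mathscr{T}$, shows $D = Z(Q)$ via the adjoint action, and then bounds the component group by a direct lattice computation: writing $X^*(D) = X^*(T_0)/L$ and showing $L$ is squeezed between the sublattice $\mathbb{Z}\Lambda$ spanned by a subset $\Lambda$ of the roots and its saturation, so that the torsion of $X^*(D)$ is controlled by the finitely many quotients $X^*(T_0)/\mathbb{Z}\Lambda$ arising from root data of rank $\leq n$. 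Your approach instead takes $S = Z_G(D)^0$, invokes Steinberg's theorem to get reductivity for free, and then reduces the bound to the order of the center of the semisimple part $[S,S]$ via the almost-direct product decomposition. Your route is shorter and more structural, at the price of citing Steinberg's theorem as a black box; the paper's route is more self-contained and hands-on with characters and roots, and in particular does not need to know that $Q$ is reductive. One cosmetic point: you write ``rank of $S$ is exactly $n$'' while the hypothesis only gives rank $\leq n$; the argument of course goes through verbatim with $m = \operatorname{rank} G \leq n$ in place of $n$.
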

\begin{proof}
	Observe that $D$ contains the radical $R(G)$ of $G$ since every maximal torus of $G$ contains $R(G)$. Passing to the quotient $G/R(G)$ if necessary, we may assume $G$ is semi-simple.
	
	Fix a maximal torus $T_0 \in \x{T}$. The group $D$ is a diagonalizable subgroup of $T_0$. To bound the number of connected components of $D$, it is equivalent to bound the order of the torsion subgroup of its character group $X^*(D) \simeq X^*(T_0)/L$, where $L \subset X^*(T_0)$ is the subgroup of characters of $T_0$ vanishing on $D$.
	
	As usual, the Lie algebra $\f{g}$ of $G$ decomposes into a direct sum
	$$
		\f{g} = \f{t}_0 \oplus\bigoplus\limits_{\alpha \in \Phi} \f{g}_{\alpha}, 
	$$
	where $\f{t}_0$ is the Lie algebra of $T_0$, $\Phi$ is the set of roots with respect to the pair $(G, T_0)$, and $\f{g}_{\alpha} = \{v \in \f{g}| \mathrm{Ad}_t(v) = \alpha(t) v \mbox{ for all } t \in T_0\}$ is the root space corresponding to $\alpha \in \Phi$. Let $Q \subset G$ be the closed subgroup generated by the tori in $\x{T}$. Then $Q$ is connected \cite[Proposition 2.2.6]{sp}, its Lie algebra $\f{q}$ takes the form
	$$
		\f{q} = \f{t}_0 \oplus\bigoplus\limits_{\alpha \in \Lambda} \mathfrak{g}_{\alpha}
	$$
	for some subset $\Lambda \subset \Phi$. 
	
	\emph{Claim: $\Lambda \subset L$ and $L/\Z\Lambda$ is torsion.}

	\emph{Proof:} Given any $x$ in the center $Z(Q)$ of $Q$, it centralizes each torus $T \in \x{T}$, thus $x$ lies in $Z_G(T) = T$ because $G$ is semi-simple. This shows that $Z(Q) \subset \bigcap T = D$. Conversely, any element of $D$ lies in the center $Z(Q)$ since the tori in $\x{T}$ generates $Q$. We deduce that $D = Z(Q)$. Consider the adjoint representation $\mathrm{Ad}: T_0 \to \gl(\f{q})$ on the Lie algebra $\f{q}$. Pick $t \in T_0$, and $v  = v_0 + \sum v_\alpha \in \f{q}$, we have $\mathrm{Ad}_t(v) = v_0 + \sum \alpha(t)v_\alpha$. Since the kernel of $\mathrm{Ad}$ is the center $Z(Q)$, we see that
		$$
			t \in D ~~\Leftrightarrow~~ t \in \mathrm{Ker}(\mathrm{Ad}) ~~\Leftrightarrow~~ \alpha(t) = 1 \mbox{ for all }\alpha \in \Lambda.
		$$
		This proves $\Lambda \subset L$. The assertion $L/\Z\Lambda$ is torsion follows from the fact that roots in $\Lambda$ vanishes simultaneously only on $D$. The claim is proved.
	
	Now we have an exact sequence
	$$
		0 \to L/\Z\Lambda \to X^*(T_0)/\Z\Lambda \to X^*(T_0)/L \to 0
	$$
	The group $L/\Z\Lambda$ is torsion, hence the inverse image of the torsion subgroup of $ X^*(T_0)/L$ in $ X^*(T_0)/\Z\Lambda$ is the torsion subgroup of $X^*(T_0)/\Z\Lambda$. Now it is enough to bound the order of torsion subgroup of $X^*(T_0)/\Z\Lambda$.
	
	It is well known that up to isomorphism, there exists finitely many connected semi-simple algebraic group of rank $\leq n$, hence there exists finitely many isomorphism classes of reduced semi-simple root data of rank $\leq n$, this in turn, implies that there are only finitely many possibilities of abelian groups of the form $X^*(T_0)/\Z\Lambda$ of rank $\leq n$. Therefore, there exists a constant $E(n)$ depending only on $n$ such that
	$$
		|\mbox{torsion subgroup of } X^*(T_0)/\Z\Lambda| \leq E(n)
	$$
	for all root data $(X^*(T_0), \Phi)$ arising from a semi-simple group of rank $\leq n$ and all possible $\Lambda \subset \Phi$. This proves the lemma.
\end{proof}

\begin{exam} \label{solvable}
	The constant $E(n)$ in the above lemma depends only on the rank of the reductive group. We remark that Lemma \ref{center} is incorrect if we drop the assumption that $G$ is reductive. Consider for each integer $\ell \geq 1$ the semi-direct product $S_\ell := \gr_m \ltimes \gr_a$ over an algebraically closed field $k$, where $\gr_m$ acts on $\gr_a$ by $(\lambda, x) \mapsto \lambda^\ell x$. $S_\ell$ is a connected non-reductive solvable group of rank $1$, and the intersection of all maximal tori is the center $Z(S_\ell)$ of $S_\ell$. It is not difficult to see that
	$$
		Z(S_\ell) = \{(\lambda, 0) | \lambda^\ell = 1, \lambda \in k^*\}
	$$
	is a cyclic group of order $\ell$. Therefore the order of $Z(S_\ell)$ is unbounded for $\ell \geq 1$.
\end{exam}

\begin{lem} \label{uniquemax} % The maximal torus containing a maximal rank finite $p$-group is unique.
	There exists a constant $D(n)$ satisfying the following property: for any connected reductive group $G$ of rank $n$ over a (possibly non-closed) field $k$, and any finite group $H \subset G(k)$, we may find a subgroup $N \subset H$ of index at most $D(n)$ such that $N$ is contained in a $T(k)$ for some subtorus $T$ of $G$, defined over $k$.
\end{lem}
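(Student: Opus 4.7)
The plan is to combine Lemma \ref{torus} (over the algebraic closure $k_a$) with a Galois descent argument, using Lemma \ref{center} to control the failure of the resulting torus to be connected.

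First, apply Lemma \ref{torus} to the reductive group $G_{k_a} := G \otimes_k k_a$ and the finite subgroup $H \subset G(k) \subset G(k_a)$. We obtain a subgroup $N_0 \subset H$ of index $[H : N_0] \leq B(n)$ which is contained in some maximal torus $T_0$ of $G_{k_a}$. The torus $T_0$ is in general only defined over $k_a$, not over $k$, so we cannot take $T = T_0$ directly.

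The key observation is that $N_0 \subset H \subset G(k)$ consists of $k$-rational points, and is therefore fixed pointwise by $\gal(k_a/k)$. Let
$$
\x{T} := \{T \subset G_{k_a} : T \text{ is a maximal torus of } G_{k_a} \text{ containing } N_0\}.
$$
Since $\gal(k_a/k)$ acts on $G_{k_a}$ by $k_a$-algebra automorphisms and fixes $N_0$ pointwise, it permutes $\x{T}$. Hence the intersection
$$
D := \bigcap_{T \in \x{T}} T
$$
is a Galois-stable closed subgroup of $G_{k_a}$, and therefore descends to a closed $k$-subgroup of $G$. By Lemma \ref{center}, $D$ has at most $E(n)$ connected components; in particular its neutral component $T := D^0$ is a subtorus of $G$ defined over $k$.

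Finally, set $N := N_0 \cap T(k_a)$. Since $N_0 \subset D(k_a)$, the index $[N_0 : N]$ equals the order of the image of $N_0$ in the component group $(D/D^0)(k_a)$, which is bounded by $E(n)$. Moreover, every element of $N$ lies in $T(k_a) \cap G(k) = T(k)$, so $N \subset T(k)$. Combining the two reductions,
$$
[H : N] = [H : N_0]\,[N_0 : N] \leq B(n)\,E(n),
$$
so the constant $D(n) := B(n)\,E(n)$ works. The step I expect to be the subtle one is verifying that the intersection $D$ really is defined over $k$ (which comes down to the Galois-invariance of $\x{T}$) and bounding the component group of $D$ uniformly in $n$; the latter is precisely the content of Lemma \ref{center}, which is why that lemma is set up in the stated generality.
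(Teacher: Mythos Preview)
Your proposal is correct and follows essentially the same route as the paper's proof: apply Lemma \ref{torus} over $k_a$ to pass to a subgroup $N_0$ (the paper's $N_1$) contained in some maximal torus, intersect all maximal tori containing $N_0$, observe this intersection is Galois-stable and hence defined over $k$, and use Lemma \ref{center} to bound the index lost when passing to its neutral component. The constant $D(n)=B(n)E(n)$ and the descent argument are identical to the paper's.
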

\begin{proof}
	Let $G_{k_a} := G \times_k k_a$. We may identify $G(k)$ with a subgroup of $G_{k_a}(k_a) = G(k_a)$. We know from Proposition \ref{torus} that $H$ has a subgroup $N_1$ of index $[H:N_1] \leq B(n)$ that is contained in at least one maximal torus of $G_{k_a}$. Let $\x{T}$ be the set of all maximal subtori of $G_{k_a}$ containing $N_1$. By Lemma \ref{center}, we may find a subgroup $N \subset N_1$ of index $[N_1:N] \leq E(n)$ such that $N$ is contained in the neutral component $S_0$ of $S = \bigcap_{T \in \x{T}} T$, a nontrivial algebraic torus of rank $\geq r(N)$.
	
	The absolute Galois group $\Gamma := \gal(k_a/k)$ acts on $G(k_a)$, and fixes all elements in $N_1$. Moreover, an automorphism $g \in \Gamma$ takes a maximal torus containing $N_1$ to another maximal torus containing $N_1$. Hence $S$, as well as its neutral component $S_0$, is invariant under $\Gamma$. It follows that $S_0$ descends to $k$-subtorus $T \subset G$. We may take $D(n) = B(n)E(n)$.
\end{proof}

Summarizing the above lemmas, we obtain the following theorem which is interesting in its own right. Recall that a \emph{Levi subgroup} of a connected linear algebra group $G$ over a field $k$ is a closed $k$-subgroup $L$ of $G$ such that the quotient morphisms $G \to G/R_u(G)$ restricts to an isomorphism of algebraic groups $L \simeq G/R_u(G)$. A Levi subgroup is reductive, and has the same rank as that of $G$. In characteristic zero, it is known that Levi subgroups of a linear algebraic group always exist \cite[Chapter VIII, Theorem 4.3]{ho}.

\begin{thm} \label{group}
	Let $D(n)$ be the constant introduced in Lemma  \ref{uniquemax}, and set $G(n) = \max\{D(n), n + 1\}$. Assume that
	
	(1) $p > G(n)$ is a prime number,
	
	(2) $k$ is a field containing a primitive $p$-th root of unity,
	
	(3) $G$ is a connected linear algebraic group of rank $n$ over $k$,
	
	(4) $G(k)$ contains a subgroup $H$ isomorphic to $(\Z/p\Z)^{r}$ for some $r \geq 1$.
	
	\noindent Then $G$ contains a split $k$-subtorus $T$ of rank $\geq r$.
\end{thm}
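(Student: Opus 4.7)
The plan is to reduce to the reductive case via the Levi decomposition and then to combine Lemma \ref{uniquemax} with Corollary \ref{split1}.

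The first step is a reduction to reductive $G$. In characteristic zero a Levi subgroup $L \subset G$ exists; it is reductive, has the same rank $n$ as $G$, and the composition $L \hookrightarrow G \twoheadrightarrow G/R_u(G)$ is an isomorphism of $k$-groups. Since $R_u(G)$ is unipotent over a field of characteristic zero, $R_u(G)(k)$ is torsion-free; hence $H \cap R_u(G)(k) = \{e\}$, and the projection maps $H$ isomorphically onto a subgroup $\bar H \simeq (\Z/p\Z)^r$ of $L(k)$. Since every split $k$-subtorus of $L$ is automatically a split $k$-subtorus of $G$, I may replace $(G,H)$ by $(L,\bar H)$ and assume from now on that $G$ is connected reductive of rank $n$.

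In this setting I apply Lemma \ref{uniquemax}: it produces a subgroup $N \subset H$ of index $[H:N] \leq D(n)$ contained in $T'(k)$ for some $k$-subtorus $T' \subset G$. Since $H$ is a $p$-group, $[H:N]$ is a power of $p$; the hypothesis $p > G(n) \geq D(n)$ therefore forces $[H:N]=1$, so $N = H$. Thus all of $H \simeq (\Z/p\Z)^r$ lies in $T'(k)$, and comparing $p$-torsion subgroups shows $\dim T' \geq r$.

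To finish I apply Corollary \ref{split1} to the torus $T'$ and the subgroup $H \subset T'(k)$. The required hypotheses hold: $k$ contains a primitive $p$-th root of unity by assumption, and $p > n+1 \geq \dim T' + 1$ since $\dim T' \leq n$. The corollary then yields a split $k$-subtorus $T \subset T'$ of dimension at least $r$ (containing $H$), and this $T$ is the desired split $k$-subtorus of $G$. The only subtlety I foresee is the Levi reduction, which hinges on existence of Levi subgroups in characteristic zero (cited in the excerpt) and on the absence of nontrivial torsion in unipotent groups in characteristic zero; once these standard facts are invoked the argument is a direct concatenation of the preceding lemmas.
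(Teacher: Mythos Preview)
Your proof is correct and follows essentially the same approach as the paper: reduce to the reductive case via a Levi subgroup (the paper phrases the injectivity of the projection as ``$H$ consists of semi-simple elements,'' which is equivalent to your torsion-freeness argument for $R_u(G)(k)$), then apply Lemma~\ref{uniquemax} and Corollary~\ref{split1} in sequence. Your write-up is in fact slightly more explicit about why the index $[H:N]$ must equal $1$.
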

\begin{proof}
	Let $L$ be a Levi subgroup of $G$, i.e., a closed subgroup of $G$ that projects isomorphically onto the connected reductive group $G/R_u(G)$.
	$$
	\xymatrix{
	&&L \ar@{^(->}[d]\ar[dr]^-{\simeq} &&\\
	1 \ar[r] & R_u(G) \ar[r] & G \ar[r]^-\pi & G/R_u(G) \ar[r] & 1
	}
	$$
	Since the group $H \subset G(k)$ consists of semi-simple elements, it maps isomorphically into a subgroup of $L(k)$ via $ G \to G/R_u(G) \to L$. Therefore, replace $H$ by its image in $L$, we may assume $H$ is contained in (the group of $k$-points of) a reductive subgroup $L \subset G$. Notice that the rank of $L$ is $n$.
	
	By Lemma \ref{uniquemax} applied to the group $L$, we see that $H$ is contained in a $k$-subtorus $T_1$ of $L$ since $p > D(n)$. Furthermore, from Corollary \ref{split1}, we know that $H$ is contained in a split subtorus $T$ of $T_1$ because $p > n + 1$. Clearly, $T$ has rank $\geq r$. This proves the theorem. 
\end{proof}

\begin{rem} 
	In order to get the constant $D(n)$ in the theorem, we pass to a Levi subgroup $L$ of $G$ at the cost of losing control of the original $H$. The author is not sure if one can prove that $H$ itself is contained in a split $k$-torus. See Remark \ref{psquest} for another comment.
\end{rem}

In the next section, we will need a special case of a theorem of Rosenlicht (cf. \cite{r1, r2}), which roughly says that a \emph{split torus} $T \simeq \gr_m^r$ acting on a variety indeed \emph{splits} that variety.

\begin{thm} \label{split} % A split solvable group action will split the target space.
	Let $T \simeq \gr_m^r$ be a split torus over a field $k$, $X$ be a $k$-variety, and $\alpha: T \times X \to X$ be a faithful action defined over $k$. Then $X$ is birational over $k$ to $\BP^r \times Z$ for some $k$-variety $Z$.
\end{thm}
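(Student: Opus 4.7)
The strategy is to combine Rosenlicht's theorem on rational quotients with Hilbert 90, exploiting the fact that a split torus over a field has only trivial torsors.

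First I would invoke Rosenlicht's theorem on the existence of a rational (geometric) quotient: there is a $T$-stable dense open subset $U \subset X$, a $k$-variety $V$ of dimension $\dim X - r$, and a $T$-invariant morphism $\pi : U \to V$ which is a geometric quotient; in particular $k(V) = k(X)^T$. Set $K := k(V) = k(X)^T$ and let $X_\eta$ denote the generic fiber of $\pi$, viewed as a $K$-variety equipped with the action of $T_K := T \times_k K$.

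Next I need to argue that $X_\eta$ is a $T_K$-torsor. Because $T$ is diagonalizable and defined over $k$, every closed subgroup scheme of $T_K$ which is cut out by characters (equivalently, every diagonalizable subgroup scheme of $T_K$) descends to a closed subgroup scheme of $T$. The stabilizer of the generic point of $X$ is such a subgroup $T_0 \subset T$. Since $T_0$ acts trivially on a dense open of $X$, it acts trivially on $X$ as a whole, and faithfulness of the $T$-action forces $T_0 = 1$. Thus $T_K$ acts on $X_\eta$ with trivial stabilizers, and together with $\dim X_\eta = r = \dim T_K$ this identifies $X_\eta \to \Spec K$ as a $T_K$-torsor (for the fppf topology).

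Now comes the key input: since $T \simeq \gm^r$ is split over $k$, it is split over $K$, and by Hilbert 90 applied componentwise,
$$
H^1_{\mathrm{fppf}}(K, T_K) \;=\; H^1_{\mathrm{fppf}}(K, \gm)^{r} \;=\; 0.
$$
Hence the torsor $X_\eta$ is trivial, i.e.\ there is a $T_K$-equivariant isomorphism $X_\eta \simeq T_K = \gm^r \times_k \Spec K$ over $K$.

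Finally, I would conclude by translating this isomorphism of generic fibers into a birational equivalence: $k(X) = k(X_\eta) \simeq K(t_1, \dots, t_r)$ as $k$-algebras, where the $t_i$ are coordinates on $\gm^r$. Taking $Z := V$ (so that $k(Z) = K$), this says that $X$ is birational over $k$ to $\gm^r \times Z$, and hence to $\BP^r \times Z$. The step I expect to require the most care is the verification of generic freeness of the $T$-action from its faithfulness and the identification of the generic fiber of $\pi$ as a $T_K$-torsor; once this is in place, Hilbert 90 finishes the argument immediately.
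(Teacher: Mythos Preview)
Your proposal is correct and follows essentially the same route as the paper's own sketch: Rosenlicht's rational quotient, generic freeness of a faithful torus action, identification of the quotient map as a $T$-torsor, and Hilbert~90 to trivialize it. The only cosmetic difference is that you work at the generic point and phrase triviality cohomologically as $H^1(K,\gm^r)=0$, whereas the paper speaks of finding a rational section of $\pi$; these are the same statement.
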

\noindent \emph{Sketch of proof.} For a faithful action of a torus on a variety, the stabilizer of a general point is trivial. Thus shrinking $X$ if needed, we may assume that the action is free. By a theorem of Rosenlicht, we may further shrink $X$, so that the geometric quotient $\pi: X \to X/G$ exists. Then $\pi$ is a $T$-torsor over $X/G$. If one can find a rational section $s: X/G \dashrightarrow X$, then $X$ is birational to $\gr_m^r \times X/G$. The existence of rational sections follows from Hilbert theorem $90$, cf. \cite{r2}. \qed\\

	Theorem \ref{split} fails if the torus $T$ is not split over $k$. For example, over the real numbers the one dimensional torus
	$$
		S = \spec \R[x_1, x_2]/(x_1^2 + x_2^2 - 1)
	$$
	is not split over $\R$. It acts faithfully on the conic
	$$
		C = \proj \R[x_1, x_2, x_3]/(x_1^2 + x_2^2 + x_3^2)
	$$
	by rotating the first two coordinates. However, $C$ is not birational to $\BP^1$ over $\R$ since it has no $\R$-points.

%Let $T$ be an algebraic torus of dimension $1$ over $k$. Assume that $T$ contains an element of order $p$. We show that up to $k$-isomorphism there exists at most one such torus.

\section{Characterizations of rational varieties} \label{main}
The group scheme of automorphisms of a projective variety preserving the linear class of an ample divisor is an algebraic group. In particular, the automorphism group of a terminal Fano variety $X$ is an algebraic group because the anti-canonical divisor $-K_X$ is preserved by every automorphism. Moreover, since terminal Fano variety has zero irregularity, $\aut(X)$ is a linear algebraic group. Notices that $\aut(X)$ is not necessarily connected, for instance, the group
	$$
		\aut(\BP_\C^1 \times \BP_\C^1) = (\pgl(2, \C) \times \pgl(2, \C)) \rtimes (\Z/2\Z)
	$$
	has two connected components. In order to obtain a uniform bound for the number of components of $\aut(X)$, we have to employ the boundedness property of terminal Fano varieties of fixed dimension, which is a special case of the Borisov-Alexeev-Borisov-Birkar's theorem \cite{b2}.

\begin{thm} \label{babb}
	The set $\x{F}_n$ of all terminal Fano variety of dimension $n$ over complex numbers $\C$ forms a bounded family. That is, there exists a projective morphism $\Phi : \mathcal{X} \to T$ between schemes of finite type over $\C$ such that every $X \in \x{F}_n$ is isomorphic to a closed fibre of $\Phi$.
\end{thm}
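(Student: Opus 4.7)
The plan is to deduce this from Birkar's resolution of the Borisov--Alexeev--Borisov conjecture, which is the content of \cite{b1, b2}. Birkar's theorem provides, for every $\epsilon > 0$ and every positive integer $n$, a projective morphism of $\mathbb{C}$-schemes of finite type whose closed fibres parametrize all $\epsilon$-log canonical Fano varieties of dimension $n$. Granting this, the only thing left to check for Theorem \ref{babb} is that terminal Fano varieties of dimension $n$ form a subfamily of $\epsilon(n)$-lc Fano varieties for a single positive constant $\epsilon(n)$, since once this is done, the bounded family produced by Birkar's theorem contains $\x{F}_n$ as a constructible subset, which itself is of finite type over $\mathbb{C}$.

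The first step I would carry out is therefore to recall, in the precise form stated by Birkar, the boundedness of $\epsilon$-lc Fano varieties together with the fact that ``bounded family'' in his sense is equivalent to the existence of a single such parametrizing morphism $\Phi\colon \x{X}\to T$, obtained by finiteness of Hilbert schemes with bounded degrees after choosing a uniformly very ample multiple of the anticanonical divisor. The second step is the reduction from the terminal case to the $\epsilon$-lc case: by definition, terminal means $a(E,X)>0$ for every exceptional divisor $E$ over $X$, which translates into a log discrepancy strictly greater than $1$ at every singular point, so that the minimal log discrepancy of a terminal variety is at least $1$. In particular any terminal Fano variety is automatically $1$-lc, and one can then directly apply Birkar's theorem with the constant $\epsilon=1$ (or any positive constant less than $1$, as convenient for the formulation used).

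The step I expect to be the most delicate is making the phrase ``terminal is $\epsilon$-lc'' work cleanly and uniformly in $n$. If one adopts the convention in which log discrepancies at smooth points equal $1$, then terminal gives mld $\geq 1$ everywhere and the reduction to $1$-lc is immediate; if instead one requires strict inequality, one needs an a priori lower bound on the positive discrepancies over the finitely many singular points that can appear, which in low dimensions follows from the ACC for minimal log discrepancies and in general is built into Birkar's proof. Once this bookkeeping is settled, the theorem is a direct quotation of \cite{b2}, and no further ingredient is required.
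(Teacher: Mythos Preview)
Your proposal is correct and matches the paper's approach exactly: the paper does not prove Theorem~\ref{babb} at all but simply records it as a special case of Birkar's BAB theorem, citing \cite{b2}. Your added explanation of why terminal Fano varieties fall under Birkar's hypothesis is more than the paper provides.

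One remark: the step you flag as ``most delicate'' is in fact trivial and needs none of the machinery you mention. By definition, terminal means $a(E,X)>0$ for every exceptional divisor $E$, hence the log discrepancy $a(E,X)+1$ is $>1$ for exceptional $E$ and equals $1$ for divisors on $X$ itself; thus every terminal variety is $1$-lc on the nose, uniformly in $n$, with no appeal to ACC for mlds or to anything inside Birkar's proof. You can safely drop the hedging in your second and third paragraphs and just state that terminal $\Rightarrow$ $1$-lc, then invoke \cite{b2} with $\epsilon=1$.
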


By flat stratification, the number of connected components of automorphism groups scheme of a terminal Fano variety of dimension $n$ over $\C$ is uniformly bounded above by a constant $C(n)$ depending only on $n$.

Given a $n$-dimensional terminal Fano variety $X$ over a (not necessarily closed) field $k$, we may find a terminal Fano variety $X_1$ over a finitely generated subfield $k_1 \subset k$ such that $X = X_1 \times_{k_1} k$. Embeds $k_1$ into $\C$, we get a terminal Fano variety $X_{\C} := X_1 \times_{k_1} \C$ over $\C$. Since the formation of $\aut(X)$ commutes with base change, so the number of geometric connected components of $\aut(X)$ and that of $\aut(X_{\C})$ are the same, hence they are all bounded by $C(n)$.\\

We are ready to prove Theorem \ref{rationality}. First we treat a special case.

We say that a variety $X$ of dimension $n$ over a field $k$ is \emph{toric} if there exists a split $k$-torus $T \simeq \gr_{m, k}^n$ acting faithfully on $X$ over $k$. By Theorem \ref{split}, a toric variety over $k$ is $k$-rational. 

\begin{thm} \label{fano}
	(1) There exists a const $A(n)$ (depending only on $n$) such that for any prime $p > A(n)$, any field $k$ containing a $p$-th primitive root of unity, and any terminal Fano variety $X$ of dimension $n$ over $k$, if $\aut(X)$ contains a subgroup $H$ isomorphic to $(\Z/p\Z)^n$, then $X$ is toric over $k$. 
	
	(2) Let $X$ be a projective rationally connected variety of dimension $n$ over a field $k$. There exists an integer $R(X)$ such that if $\aut(X)$ contains a subgroup $H$ isomorphic to $(\Z/p\Z)^n$ for some prime $p > R(X)$, and if the field $k$ contains a primitive $p$-th root of unity, then $X$ is toric over $k$.
	
	In both cases, $X$ is $k$-rational.
\end{thm}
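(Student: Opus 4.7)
The plan for part (1) is to combine the uniform bound on the component group of $\aut(X)$ coming from BABB's boundedness (Theorem \ref{babb}) with the splitness result of Theorem \ref{group} to embed $H$ into a split subtorus of the optimal rank. Since $X$ is a terminal Fano variety, $\aut(X)$ is a linear algebraic group (it preserves $-K_X$, and the irregularity vanishes), and the number of its connected components is bounded by the constant $C(n)$ as remarked after Theorem \ref{babb}. Choose $A(n)$ so that $A(n) > C(n)$ and $A(n) \geq G(m)$ for every $1 \leq m \leq n$. For $p > A(n)$, the image of $H$ in $\aut(X)/\aut_0(X)$ is a $p$-group whose order divides $C(n) < p$, hence is trivial, so $H \subset \aut_0(X)(k)$. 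Now $\aut_0(X)$ is a connected linear algebraic group acting faithfully on the $n$-dimensional variety $X$, so its rank $m$ satisfies $m \leq n$; applying Theorem \ref{group} produces a split $k$-subtorus $T \subset \aut_0(X)$ of rank at least $n$ containing $H$. Since $T$ acts faithfully on $X$, its rank is also at most $n$, and hence exactly $n$. Therefore $X$ is toric over $k$, and Theorem \ref{split} delivers $k$-rationality.

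For part (2), where $X$ is only assumed projective rationally connected, I would replace the uniform BABB bound by one depending on $X$. Because $X$ is rationally connected, any morphism from $X$ to an abelian variety is constant, and a standard application of the Chevalley-Nishi-Matsumura machinery shows that $\aut_0(X)$ is again a linear algebraic group; let $m(X) \leq n$ denote its rank. Lemma \ref{comp} provides a constant $\ell(X)$ such that every finite subgroup of $\aut(X)/\aut_0(X)$ has order at most $\ell(X)$. Setting $R(X) := \max\{\ell(X), G(m(X)), n+1\}$, the same chain of implications as in part (1) applies verbatim: $H \subset \aut_0(X)(k)$ for $p > R(X)$, Theorem \ref{group} yields a split $k$-subtorus of $\aut_0(X)$ of rank at least $n$ containing $H$, faithfulness of the action caps the rank at $n$, and Theorem \ref{split} produces $k$-rationality.

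The technical hinge is Theorem \ref{group}; once that is in hand, the rest of the argument is an elementary interplay between the size of $H$, the finite-order obstruction coming from the component group, and the dimension cap from faithful torus actions on $X$. The main subtle point I expect to verify with care is that Theorem \ref{group}'s split subtorus has rank exactly $n$: the theorem only guarantees rank $\geq n$, and it is the faithful action on the $n$-dimensional $X$ that pins it down. A second delicate ingredient is that we truly need splitness over $k$ (not merely over $k_a$) to invoke Theorem \ref{split} and obtain $k$-rationality rather than mere $k_a$-rationality; the hypothesis that $k$ contains a primitive $p$-th root of unity is exactly what feeds splitness into Theorem \ref{group} via Corollary \ref{split1}.
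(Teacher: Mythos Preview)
Your argument is essentially the paper's own proof: BABB (resp.\ Lemma~\ref{comp}) forces $H$ into the neutral component, Theorem~\ref{group} produces a split $k$-subtorus of rank $\geq n$, the faithful action caps the rank at $n$, and Theorem~\ref{split} gives $k$-rationality. One small overstatement to correct: Theorem~\ref{group} does \emph{not} assert that the split subtorus $T$ contains $H$ itself---its proof passes to a Levi subgroup and only shows that the image of $H$ there lies in $T$ (the paper explicitly flags this in Remark~\ref{psquest}(2))---but your argument never actually uses $H\subset T$, only the existence of a rank-$n$ split torus acting faithfully, so the proof stands once you drop that clause.
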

\begin{proof}
	(1) By Theorem \ref{babb}, the set of all terminal Fano varieties of dimension $n$ forms a bounded family. In particular, the number of geometric connected components of the automorphism group $\aut(X)$ is uniformly bounded above by a constant $C(n)$. Thus, if $p > C(n)$, then $H$ is contained in (the group of $k$-points of) the neutral component $\aut(X)_0$, a connected linear algebraic group over $k$ of rank $\leq n$. On the other hand, by Theorem \ref{group}, $\aut(X)_0$ contains a split $k$-subtorus $T$ of rank $\geq n$ if $p \geq G(n)$. This shows that $T \simeq \gr_{m, k}^n$.
	
	By Rosenlicht's Theorem \ref{split}, $X$ is toric and birational to $\BP_k^n$ over $k$. It is enough to take $A(n) = \max\{C(n), G(n)\}$.
	
	(2) By Lemma \ref{comp}, there exists a constant $\ell(X)$ such that any finite subgroup of $\aut(X)/\aut_0(X)$ has at most $\ell(X)$ elements. Thus $H$ is contained in (group of $k$-points of) the neutral component $\aut_0(X)$ if $p > \ell(X)$. The rest of proof is similar to (1). We may take $R(X) = \max\{\ell(X), G(n)\}$. 
\end{proof}

\begin{rem} \label{psquest}
	(1) If the base field $k$ is algebraically closed, Theorem \ref{fano} (2) answers a question of Prokhorov and Shramov \cite[Question 1.9]{ps3}. A more straightforward argument goes like this: by Lemma \ref{comp}, the $p$-group $H$ is contained in $\aut_0(X)$ if $p$ is very large; Lemma \ref{torus} tells us that $H$ is contained in a maximal torus (of rank $n$) of $\aut_0(X)$ if $p > B(n)$, hence $X$ is toric and rational because we are working over an algebraically closed field.
	
	(2) When $k$ is not algebraically closed, and when dealing with family of Fano varieties, as remarked in Section 2, in order to get an effective constant $A(n)$, we pass to a Levi subgroup of $\aut_0(X)$ to avoid the possibility of Example \ref{solvable}. However, in this concrete situation, it is possible to exclude Example \ref{solvable} by a further application of boundedness theorem, and it also seems feasible to prove that $H$ itself is contained in a split subtorus of $\aut_0(X)$.
	
	(3) In the above theorem, one can prove more generally that if the $r(H)< \dim X$, then $\aut(X)$ contains a split subtorus of dimension $r(H)$, so that $X$ splits birationally into a product of a rational variety with some other variety, see \cite{g} for a similar result on forms of flag varieties. 
\end{rem}

\begin{exam}
	Let's work out the simplest case by finding a value of $A(1)$ in Theorem \ref{fano} (2). It also gives a rationality criterion for conics that is similar to Theorem \ref{bz} of Bandman and Zarhin.
	
	\emph{Claim: Let $p \geq 3$ be a prime number, and $k$ be a field that contains a primitive $p$-th root of unity. Let $C$ be a geometrically rational curve over $k$, and suppose that $\varphi \in \bir(C)$ is a birational automorphism of order $p$. Then $C$ is $k$-rational. In other words, we can take $A(1) = 2$ in Theorem \ref{fano}.}

	Replace $C$ by the normalization of its completion, we may assume $C$ is a nonsingular projective curve, so that $\bir(C) = \aut(C)$. Passing to an algebraic closure $k_a$ of $k$,  we see that $\varphi$ is contained in a torus $T$ (which is necessarily maximal since $\pgl(2, k_a)$ has rank $1$). Since any two tori in $\pgl(2, k_a)$ have two pairs of distinct fixed points on $\BP^1$, $T$ is the unique torus containing $\varphi$. Thus $T$ is fixed by the absolute Galois group $\gal(k_a/k)$, and hence defined over $k$. Now by Corollary \ref{split1}, $T$ splits over $k$ because $k$ contains a primitive $p$-th root of unity. It follows from Theorem \ref{split} that $C$ is $k$-rational.
\end{exam}

We now prove the Theorem \ref{rationality}.

\begin{thm} \label{rationalrcv}
	There exists a constant $R(n)$ such that for any prime $p > R(n)$, and any rationally connected variety $X$ of dimension $n$ over a field $k$ that contains all roots of unity, if $\bir(X)$ contains a subgroup $G$ isomorphic to $(\Z/p\Z)^n$, then $X$ is $k$-rational.
\end{thm}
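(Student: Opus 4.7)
The plan is induction on $n$, choosing $R(n)$ large enough to dominate $A(m)$, $B(m)$, $L(m)$ for all $m \leq n$ and $R(m)$ for all $m < n$. The base $n = 0$ is vacuous. For the inductive step, by equivariant completion and resolution of singularities (which is available for the finite group $G$), one first regularizes the birational $G$-action to a biregular one on a smooth projective model. Next, run a $G$-equivariant MMP to obtain either a $G$-Fano variety or a $G$-equivariant Mori fibre space $f: X' \to Z$ with $\dim Z < n$, where $X'$ has terminal singularities.

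\textbf{Fano case ($Z$ a point).} Here $X'$ is an $n$-dimensional terminal Fano variety over $k$ on which $G \cong (\Z/p\Z)^n$ acts faithfully. Since $k$ contains a primitive $p$-th root of unity and $p > A(n)$, Theorem \ref{fano}(1) applies directly: $X'$ is toric over $k$, and Rosenlicht's Theorem \ref{split} gives $k$-rationality.

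\textbf{Fibration case ($\dim Z > 0$).} The induced homomorphism $G \to \bir(Z)$ yields a short exact sequence $1 \to G' \to G \to G'' \to 1$, with $G''$ acting faithfully on $Z$ and $G'$ faithfully on the generic fibre $X_\eta$ (a terminal Fano variety over $k(Z)$). Both $G'$ and $G''$ are elementary abelian $p$-groups, and $r(G')+r(G'')=n$ since $G$ is elementary abelian. Because $X$ is rationally connected, $Z$ is too, and $X_\eta$ is geometrically rationally connected; for $p > L(n)$ Theorem \ref{psrank} therefore forces $r(G') \leq \dim X_\eta$ and $r(G'') \leq \dim Z$, whence equality holds in both. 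By induction (applied to $Z$, whose dimension is strictly smaller), $Z$ is $k$-rational. The field $k(Z)$ still contains all roots of unity, so Theorem \ref{fano}(1) applied to the $G'$-Fano generic fibre $X_\eta$ over $k(Z)$, with $p > A(\dim X_\eta)$, shows $X_\eta$ is $k(Z)$-rational. Consequently $X$ is birational over $k$ to $\BP^{\dim X_\eta} \times Z$, which is $k$-rational.

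\textbf{Main obstacle.} The delicate points are: (i) arranging the equivariant regularization and MMP in a way that preserves the relevant good singularities (terminal), so that BABB boundedness and Theorem \ref{fano}(1) can be invoked uniformly in $n$; and (ii) verifying that the generic fibre of the $G$-MFS inherits the hypotheses of Theorem \ref{fano}(1), in particular that $G'$ embeds into $\aut(X_\eta)$ (not merely $\bir(X_\eta)$) once one passes to the terminal Fano model over $k(Z)$. Once these MMP-theoretic issues are settled, the rank identity $r(G')+r(G'')=n$ combined with the rank bounds $\dim X_\eta$ and $\dim Z$ is precisely what lets the induction close, matching the generic fibre to the toric criterion and the base to the inductive hypothesis.
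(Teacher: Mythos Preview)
Your proposal is correct and follows essentially the same route as the paper: induction on $n$, regularize the $G$-action on a smooth projective model, run a $G$-MMP to a Mori fibre space $Y\to Z$, split $G$ along the fibration, use Theorem~\ref{psrank} to force $r(G')=\dim Y_\eta$ and $r(G'')=\dim Z$, apply the inductive hypothesis to the rationally connected base $Z$, and apply Theorem~\ref{fano}(1) to the terminal Fano generic fibre over $k(Z)$. The only cosmetic difference is that you single out the Fano case $\dim Z=0$ separately, whereas the paper absorbs it into the induction via the trivial base case $R(0)=1$; your flagged ``obstacles'' (terminality of $Y_\eta$ and $G'\subset\aut_{k(Z)}(Y_\eta)$) are exactly the points the paper handles implicitly by noting that the output of the $G$-MMP is a genuine $G$-equivariant Mori fibre space with terminal total space.
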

\begin{proof}
	We proceed by induction on $n = \dim X$. When $n = 0$, the theorem trivially holds with $R(0) = 1$.
	 
	As before, replacing $X$ by a projective completion, regularizing the action of $G$ on $X$, and replacing by a equivariant resolution of singularities, we may assume that $X$ is a non-singular projective variety over $k$ on which $G$ acts biregularly.
	
	Run a $G$-MMP on $X$, we terminate with a $G$-Mori fibre space $f: Y \to Z$ with $\dim Z < n$. The group $G$ decomposes as
	$$
		0 \to G' \to G \to G" \to 0
	$$
	where $G"$ and $G'$ are elementary abelian $p$-groups that act faithfully on the base $Z$ and the generic fibre $Y_\eta$ respectively. Thus we may identify $G"$ (resp. $G'$) as a subgroup of $\aut_k(Z)$ (resp. $\aut_{k_\eta}(Y_\eta)$).
	
	Notice that both $Z$ and $Y_\eta$ are rationally connected. We know from Theorem \ref{psrank} that $r(G') \leq \dim Y_\eta$ and $r(G") \leq \dim Z$ if $p > \max\{L(\dim Z), L(\dim Y_\eta)\}$. By our assumption $r(G) = \dim X$, we deduce that $r(G') = \dim Y_\eta$ and $r(G") = \dim Z$. In other words, $G'$ and $G"$ are always of maximal rank if $p$ is sufficiently large.
	
	By induction, $Z$ is rational over $k$ if $p > \max\{L(\dim Z), R(\dim Z)\}$, and $Y_\eta$ is rational over $k(Z)$ if $p > \max\{L(\dim Y_\eta), A(\dim Y_\eta)\}$ by Theorem \ref{fano} (2). Set
	$$
		R(n) = \max\{R(0), R(1), \cdots, R(n-1), A(1), A(2), \cdots, A(n), L(0), L(1), \cdots, L(n)\}
	$$
	It follows that $X$ is rational over $k$ whenever $p > R(n)$.
\end{proof}

%\textbf{Some ideas:} Let $X$ be a rationally connected variety over $k$. By running equivariant MMP, we achieve at a tower $X_m \xrightarrow{f_m} X_{m-1} \xrightarrow{f_{m-1}} \cdots \to X_1 \xrightarrow{f_1} X_0 = pt$, where each $f_i$ is Mori fibre space. Here the variety $X_m$ itself may not necessarily be a Fano variety, as shown by the general Hirzebruch surfaces $F_n \to \BP^1$.
\begin{rem}
	In Question \ref{finitecontinuous}, when the rank of the $p$-group $G$ is not maximal, i.e. $r(G) < \dim X$, we can still run a $G$-MMP, and terminate at a $G$-Mori fibre space. Again we will have an exact sequence $1 \to G' \to G \to G" \to 1$. But the problem is that in this case $G'$ or $G"$ might be trivial. Even if they are not trivial, by induction we may assume either the generic fibre or the base has an action of a torus, unlike the case of Theorem \ref{fano}, we do not know how to lift this continuous group action (especially from the base) to an action on the total space $X$.
\end{rem}

In \cite{c}, Cantat characterized rationality of a complex algebraic variety by means of its birational automorphism group. Applying the results developed above, we obtain a generalization of his main theorem. An interesting feature of our approach is that the base field can be non-closed.

\begin{thm} \label{cantat}
	Let $X$ be an algebraic variety of dimension $n \geq 1$ over a field $k$ that contains all roots of unity. Assume that $\bir(X)$ contains a subgroup $G$ that is isomorphic to $\mathrm{Cr}_n(k)$ (as an abstract group), then $X$ is $k$-rational.
\end{thm}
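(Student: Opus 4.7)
Since $k$ contains all roots of unity, the standard torus $(k^*)^n\subset\mathrm{Cr}_k(n)$ contains $\mu_p^n\cong(\Z/p\Z)^n$ for every prime $p$, and so through the given embedding $\bir(X)\supset(\Z/p\Z)^n$ for every $p$. Consequently, once $X$ is known to be rationally connected, Theorem \ref{rationalrcv} applied with any prime $p>R(n)$ concludes immediately. The whole task therefore reduces to establishing rational connectedness of $X$, and for this I will exploit the subgroup
\[
\mathrm{PSL}_{n+1}(k)\subset\mathrm{PGL}_{n+1}(k)=\aut_k(\BP^n)\subset\mathrm{Cr}_k(n)\subset\bir(X).
\]
Since $k$ is infinite, $\mathrm{PSL}_{n+1}(k)$ is a non-abelian simple group, and its diagonal maximal torus contains $\mu_p^n$ for every prime $p>n+1$ (coprimality of $p$ with $n+1$), so $\mathrm{PSL}_{n+1}(k)$ has finite subgroups of arbitrarily large order.

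Suppose for contradiction that $X$ is not rationally connected, and let $X\dashrightarrow Z$ be its MRC fibration, with $m:=\dim Z\geq 1$, $Z$ non-uniruled, and generic fibre $X_\eta$ rationally connected of dimension $n-m<n$ over $k(Z)$. Functoriality of MRC yields a group homomorphism $\bir(X)\to\bir(Z)$ whose kernel is contained in $\bir_{k(Z)}(X_\eta)$; its restriction to the simple group $\mathrm{PSL}_{n+1}(k)$ is either trivial or injective. If trivial, then $\mathrm{PSL}_{n+1}(k)\subset\bir_{k(Z)}(X_\eta)$, hence $\mu_p^n\subset\bir_{k(Z)}(X_\eta)$ for every prime $p>n+1$; Theorem \ref{psrank} applied over $k(Z)$ to the rationally connected variety $X_\eta$ bounds the rank of every finite $p$-subgroup by $n-m<n$, a contradiction once $p>\max\{n+1,L(n-m)\}$. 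If injective, then $\mathrm{PSL}_{n+1}(k)\hookrightarrow\bir(Z)$, and since by Theorem \ref{bir} the group $\bir_0(Z)=\aut_0(Y_Z)$ is an abelian variety, the intersection $\mathrm{PSL}_{n+1}(k)\cap\bir_0(Z)$ is a normal subgroup of the simple group $\mathrm{PSL}_{n+1}(k)$ which is contained in an abelian group, hence trivial; thus $\mathrm{PSL}_{n+1}(k)$ embeds into $\bir(Z)/\bir_0(Z)$, which has bounded finite subgroups by Lemma \ref{component}, contradicting the unbounded finite subgroups of $\mathrm{PSL}_{n+1}(k)$. The case when $X$ itself is non-uniruled is simply the injective case with $Z=X$.

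Both alternatives being impossible forces $X$ to be rationally connected, and Theorem \ref{rationalrcv} then completes the argument. The main conceptual point---and the reason that the finite $p$-rank bounds of Proposition \ref{prank1} alone do not suffice---is the interplay between the simplicity of $\mathrm{PSL}_{n+1}(k)$, which kills the abelian-variety identity component $\bir_0(Z)$, and its large finite torsion, which simultaneously conflicts with $p$-rank bounds on rationally connected fibres and with component-group boundedness on non-uniruled bases. The only mild technicality is a careful treatment of the MRC-induced map $\bir(X)\to\bir(Z)$ as a well-defined group homomorphism with the claimed kernel, which is standard after fixing birational models.
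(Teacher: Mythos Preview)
Your argument is correct and follows essentially the same route as the paper: reduce to rational connectedness via the MRC fibration, use simplicity of the projective linear group to force the induced action on the base to be trivial or injective, rule out injectivity via Lemma~\ref{component} (bounded finite subgroups in the component group of a quasi-minimal model), rule out triviality via the $p$-rank bound of Theorem~\ref{psrank} on the generic fibre, and then invoke Theorem~\ref{rationalrcv}. Your choice of $\mathrm{PSL}_{n+1}(k)$ rather than $\mathrm{PGL}_{n+1}(k)$ is in fact slightly more careful, since $\mathrm{PGL}_{n+1}(k)$ need not be abstractly simple over an arbitrary field containing all roots of unity (the quotient $k^{*}/(k^{*})^{n+1}$ can be nontrivial), whereas $\mathrm{PSL}_{n+1}(k)$ is; the paper's version works for the same reason after this cosmetic fix. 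One small point to tidy: when you invoke Theorem~\ref{bir} and Lemma~\ref{component} you should explicitly replace $Z$ by a quasi-minimal model first (as the paper does), rather than leaving this implicit in the notation $Y_Z$.
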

\begin{proof}
	The Cremona group $\mathrm{Cr}_n(k)$ contains the projective linear group $G = \pgl(n+1, k)$ as a subgroup. The latter group, in turn, contains subgroups isomorphic to $(\Z/p\Z)^n$ for all prime $p$ because the field $k$ contains all roots of unity. In view of Theorem \ref{rationalrcv}, the only thing we need to show is that $X$ is rationally connected.
	
	Let $f: X \dashrightarrow Z$ be the MRC fibration of $X$. Assume that $\dim Z > 0$. As before, we may assume $Z$ is a quasi-minimal model. By the functoriality of MRC fibration, the group $G$ acts on $Z$ as birational automorphisms, thus it induces a homomorphism of abstract groups $G \to \bir(Z)$. Since $\pgl(n+1, k)$ is a non-abelian simple group for $n \geq 1$ and perfect field $k$, this homomorphism is either trivial or injective. The latter case is impossible. For otherwise, we may identify $G$ as a subgroup of the birational automorhpism group $\bir(Z)$, so that $G \cap \bir_0(Z) = G$ or $\{id\}$. The first case implies that $G$ is abelian, the second case implies that $G$ embeds into the group $\bir(Z)/\bir_0(Z)$. Both of the cases are absurd.
	
	Thus $G$ acts trivially on $Z$. In particular, $G$ acts faithfully on the generic fibre $X_\eta$ of $f$. Since $G$ contains subgroups isomorphic to $(\Z/p\Z)^n$ for arbitrarily large prime $p$, by Theorem \ref{psrank}, we deduce that $\dim X_{\eta} = n$. This proves that $\dim Z = 0$, and $X$ is therefore rationally connected.
	\end{proof}

%%%%%%%%%%%%%%%%%%%%%%%%%%%%%%%%%%%%%%

\noindent Department of Mathematical Sciences\\
\noindent Xi'an Jiaotong-Liverpool University\\
\noindent No.111 Renai Road, Industrial Park, Suzhou, Jiangsu Province, China\\
\noindent e-mail: jinsong.xu@xjtlu.edu.cn
\end{document}